\renewcommand*{\eqref}[1]{%
  \hyperref[{#1}]{\textup{\tagform@{\ref*{#1}}}}%
}
\theoremstyle{plain}
\newtheorem{theorem}{Theorem}[section]
\newtheorem{lemma}[theorem]{Lemma}
\newtheorem{proposition}[theorem]{Proposition}
\theoremstyle{definition}
\newtheorem{definition}[theorem]{Definition}
\newtheorem{example}[theorem]{Example}
\newtheorem*{notation}{Notation}
\numberwithin{equation}{section}
\def\L1loc{L^1_{\text{loc}}}
\begin{document}
This is the pre-peer reviewed version of the following article: Pe{\v s}a D. Reduction principle for a certain class of kernel-type operators. \emph{Mathematische Nachrichten}. 2020;1–13., which has been published in final form at  \url{https://doi.org/10.1002/mana.201800510}. This article may be used for non-commercial purposes in accordance with Wiley Terms and Conditions for Use of Self-Archived Versions.

\title{Reduction principle for a certain class of kernel-type operators}
\author{Dalimil Pe{\v s}a}

\address{Dalimil Pe{\v s}a, Department of Mathematical Analysis, Faculty of Mathematics and
Physics, Charles University, Sokolovsk\'a~83,
186~75 Praha~8, Czech Republic}
\email{pesa@karlin.mff.cuni.cz}
\urladdr{0000-0001-6638-0913}

\subjclass[2000]{46E30,26D10}
\keywords{Hardy--Littlewood inequality, kernel operator, rearrangement-invariant norms, down-dual norm}

\thanks{This research was supported by the grant SFG205 of Faculty of Mathematics and Physics, Charles University, Prague.}

\begin{abstract}
The classical Hardy--Littlewood inequality asserts that the integral of a product of two functions is always majorized by that of their non-increasing rearrangements. One of the pivotal applications of this result is the fact that the boundedness of an integral operator which integrates over some right neighbourhood of zero is equivalent to the boundedness of the same operator on the cone of positive non-increasing functions. It is well known that an analogous inequality for integration away from zero is not true. However, as we show in this paper, the equivalence of the restricted inequality for the non-restricted one is still true for certain class of kernel-type operators, regardless of the measure of the integration domain.
\end{abstract}

\date{\today}

\maketitle

\makeatletter
   \providecommand\@dotsep{2}
\makeatother

\section{Introduction}

The classical Hardy--Littlewood inequality asserts that for every pair of functions $f,g$ defined on a $\sigma$-finite measure space $(R,\mu)$, one always has
\[
\int_R|f(x)g(x)| \: d\mu(x) \leq \int_0\sp{\infty}f\sp*(t)g\sp*(t) \: dt,
\]
where $f\sp*,g\sp*$ denote the non-increasing rearrangements of $f,g$, respectively. It has applications all over the place, in particular in theory of Banach function spaces or in interpolation theory. An important particular case is the estimate
\[
\int_0\sp t |f(s)| \: ds \leq\int_0\sp t f\sp*(s) \: ds,
\]
which is valid for any measurable function $f$ on $(0,\infty)$. An interesting and useful consequence of this fact is that, given a nonnegative measurable function (weight) $w$ on $(0,\infty)$, the following two statements are equivalent:

\begin{enumerate}

\item there exists a~positive constant $C_1$ such that for every nonnegative and nonincreasing function $f$ the weighted Hardy-type inequality
\begin{equation*}\label{E:hardy}
\left\|w(t)\int_0\sp t f(s) \: ds \right\|_{Y(0,\infty)}
\leq C_1
\left\|f\right\|_{X(0,\infty)}
\end{equation*}
holds,

\item there exists a~positive constant $C_2$ such that for every nonnegative measurable function $f$ one has
\begin{equation*}\label{E:hardy2}
\left\|w(t)\int_0\sp tf(s) \: ds\right\|_{Y(0,\infty)}
\leq C_2
\left\|f\right\|_{X(0,\infty)}.
\end{equation*}

\end{enumerate}
Such an equivalence is often called a \textit{reduction principle} and it comes very handy in the research of mapping properties of operators and embeddings.

The crucial point in the Hardy--Littlewood inequality is that the integration takes place near zero, that is, over the interval $(0,t)$. If the integration interval is bounded away from zero (typically when integrating over $(t,\infty)$ rather than over $(0,t)$), a~statement analogous to the Hardy--Littlewood inequality is no longer true. Nevertheless, in many situations, reduction principles for operators involving integration away from zero are desirable. Pivotal examples are provided by the study of Sobolev embeddings, trace embeddings, or boundedness of important integral operators such as the Riesz potential, various modifications of the Hardy--Littlewood maximal operator, the Laplace transform, singular integrals, etc.

In connection with investigation of the relationship of isoperimetric profile of a~domain in a~Euclidean space to higher-order Sobolev embeddings it was shown in~\cite{CianchiPick15} that despite the lack of the appropriate Hardy--Littlewood inequality, there is still some chance for obtaining a sensible reduction principle, at least for operators of a~certain specific form and for weights that satisfy some monotonicity conditions. This result is quite deep, even surprising, and its proof is based on a combination of fine methods from real analysis with properties of the so-called \textit{down-dual} functionals known from the function space theory.
A principal restriction of the scope of applications of this result is however its restriction to finite intervals. For this reason it cannot be used for example when an action of potential operators or fractional maximal operators is investigated on function spaces built over the entire Euclidean space, which often arises in practical applications. In this paper, we fill in this gap and extend the result of~\cite{CianchiPick15} to the cases when integration takes part over an infinite measure space. Needless to say that this extension is far from being just some dull generalization. Indeed, a~new technique had to be developed in order to get it, although, naturally, the known results and methods have been exploited, too.

Let us now formulate our main result.

\begin{theorem}\label{MainResultLite}
Let $I:(0,\infty) \rightarrow (0,\infty)$ be a non-decreasing and let  $\lVert \cdot \rVert_X$ and  $\lVert \cdot \rVert_Y$ be rearrangement invariant Banach function norms on $M((0, \infty), \lambda)$. Then the following statements are equivalent:
\begin{enumerate}
\item There exists a~constant $C \in \mathbb{R}$ such that
\begin{equation}
\bigg \lVert \int_t^{\infty} \frac{f(s)}{I(s)}  \:  ds \bigg \rVert_Y \leq C \lVert f \rVert _X \label{TheoremML1}
\end{equation}
for all non-negative $f \in X$.
\item There exists a~constant $C' \in \mathbb{R}$ such that
\begin{equation}
\bigg \lVert \int_t^{\infty} \frac{f(s)}{I(s)}  \:  ds \bigg \rVert_Y \leq C' \lVert f \rVert _X \label{TheoremML2}
\end{equation}
for all non-increasing non-negative $f \in X$.

Furthermore, if \eqref{TheoremML2} holds, then \eqref{TheoremML1} holds with $C = 4C'$.
\end{enumerate}
\end{theorem}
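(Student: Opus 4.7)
The implication (i)$\Rightarrow$(ii) is immediate, since (ii) is just (i) restricted to non-increasing functions. The plan for the converse is to obtain a pointwise upper bound for $Sf(t):=\int_t^\infty f(s)/I(s)\,ds$ in terms of $f^*$ alone, and then pass to $Y$-norms using hypothesis (ii). Since $I$ is non-decreasing, $1/I$ is non-increasing, and applying the Hardy--Littlewood inequality on $(t,\infty)$---noting that the non-increasing rearrangement of $(1/I)|_{(t,\infty)}$, transported to $(0,\infty)$ via the shift $s\mapsto s-t$, is exactly $s\mapsto 1/I(t+s)$---gives
\[
Sf(t)\le\int_0^\infty\frac{f^*(s)}{I(t+s)}\,ds.
\]
Splitting this integral at $s=t$ and using $I(t+s)\ge I(t)$ on $\{s\le t\}$ together with $I(t+s)\ge I(s)$ on $\{s\ge t\}$ yields the clean decomposition
\[
Sf(t)\le\frac{tf^{**}(t)}{I(t)}+Sf^*(t),\qquad f^{**}(t):=\tfrac{1}{t}\int_0^t f^*.
\]
Hypothesis (ii) applied to the non-increasing $f^*$ immediately controls the second summand: $\|Sf^*\|_Y\le C'\|f\|_X$.

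The crucial remaining step is to bound $\|tf^{**}(t)/I(t)\|_Y$ by $3C'\|f\|_X$; combined with the estimate on $Sf^*$ this produces the claimed $\|Sf\|_Y\le 4C'\|f\|_X$. The difficulty is that $t\mapsto tf^{**}(t)/I(t)$ is generally not non-increasing, so one cannot invoke (ii) on it directly. My plan is to dualize: writing
\[
\left\|\frac{tf^{**}(t)}{I(t)}\right\|_Y=\sup_{\substack{g\ge 0\\ \|g\|_{Y'}\le1}}\int_0^\infty g(t)\,\frac{tf^{**}(t)}{I(t)}\,dt,
\]
applying Fubini and $tf^{**}(t)=\int_0^t f^*$, each pairing in the supremum becomes $\int_0^\infty f^*(s)\,Sg(s)\,ds$, in which $Sg$ is automatically non-increasing whatever $g$ is. The cone-restricted hypothesis (ii), via its ``down-dual'' formulation on $X$, together with the self-improvement relation $f^{**}(2t)\ge\tfrac12 f^{**}(t)$ and the concavity of $t\mapsto tf^{**}(t)$ (a primitive of the non-increasing $f^*$), is expected to reduce this pairing to $C'\|f\|_X\|g\|_{Y'}$ up to an absolute constant.

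The principal obstacle is executing this last duality step: promoting a cone-restricted (down-dual) bound into a full $Y'$-level control on $Sg$. On the unbounded interval $(0,\infty)$ one cannot rely on the constant function $1$ belonging to $Y$ or on other finite-measure tricks, which is presumably where the ``new technique'' promised in the introduction must come in and where the factor $4$ in the final constant should arise.
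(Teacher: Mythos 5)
Your reduction to the two terms $t f^{**}(t)/I(t)$ and $Sf^*(t)$ is correct as a pointwise estimate, but the ``crucial remaining step'' is not merely unfinished --- the claim you need there is false, so this route cannot be completed. Note that $t f^{**}(t)/I(t)=R_I f^*(t)=\frac{1}{I(t)}\int_0^t f^*(s)\,ds$, and hypothesis (ii) does \emph{not} imply $\lVert R_I f^*\rVert_Y\lesssim C'\lVert f\rVert_X$. Concretely, take $I(t)=t$ and $X=Y=L^1$. Then for every non-negative $f$ (not only non-increasing ones) Fubini gives $\int_0^\infty\!\int_t^\infty \frac{f(s)}{s}\,ds\,dt=\int_0^\infty f(s)\,ds$, so both (i) and (ii) hold with $C=C'=1$; yet $t f^{**}(t)/I(t)=f^{**}(t)$ and $\lVert f^{**}\rVert_{L^1}=\int_0^\infty f^*(s)\int_s^\infty\frac{dt}{t}\,ds=\infty$ for every nonzero $f$. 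The same example shows that already your first step is too lossy: the majorant $\int_0^\infty f^*(s)/I(t+s)\,ds$ (for $f^*=\chi_{(0,1)}$ it behaves like $1/t$ at infinity) has infinite $Y$-norm while $\lVert Sf\rVert_Y\le\lVert f\rVert_X$. This is exactly the known obstruction discussed in the introduction: the Hardy--Littlewood-type bound is sharp for each fixed $t$ separately (it is the supremum of $S\tilde f(t)$ over $\tilde f$ equimeasurable with $f$), but the worst rearrangement cannot be chosen simultaneously for all $t$, so no pointwise majorant of $Sf$ depending only on $f^*$ can be norm-controlled in general. Your subsequent dualization plan inherits the problem: after Fubini you would need $\lVert H_I g\rVert$-type control at the level of $X'$ for arbitrary $g$, which is essentially the associate form of statement (i) itself, and in any case no correct argument can prove an inequality whose left-hand side is infinite in the example above.

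The paper's proof avoids pointwise majorization entirely and works on the dual side. Using that $((0,\infty),\lambda)$ is resonant and that $H_I$ and $R_I$ are mutually associate, one rewrites $\lVert H_I f\rVert_Y=\sup_{\lVert g\rVert_{Y'}\le1}\int_0^\infty f\,R_I g^*$, observes that (ii) is equivalent to the bound $\sup_{\lVert g\rVert_{Y'}\le1}\lVert R_I g^*\rVert_{X'_d}\le C'$ in the \emph{down-associate} norm, and then the heart of the matter is Theorem~\ref{TheoremE}: $\lVert R_I g^*\rVert_{X'}\le 4\,\lVert R_I g^*\rVert_{X'_d}$ (for $m=1$), proved via the structure of the set where $R_I g^*<G_I g$, the doubling estimate of Lemma~\ref{LemmaR}, and an averaging operator on the resulting intervals. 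That inequality, fed back through the same duality, yields (i) with $C=4C'$. If you want to salvage your attempt, you would have to replace the pointwise decomposition by this down-dual mechanism (or prove an equivalent of Theorem~\ref{TheoremE}); the factor $4$ comes from there, not from splitting the integral.
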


We will in fact prove a stronger version of Theorem~\ref{MainResultLite}, but in order to formulate this result, that is, Theorem~\ref{MainResult}, some preliminary work is needed, namely one needs Definition~\ref{DefED} which is rather complicated, and for this reason we present here only a~simpler version of the result.

The paper is structured as follows. In the next section we collect all the background material and quote all the known facts which we are going to use in the sequel. In the final section we prove the main result.

\section{Preliminaries}

From now on, we will denote by $(R, \mu)$, and occasionally $(S, \nu)$,  some arbitrary sigma-finite measure space. When $E \subseteq R$, we will denote its characteristic function by $\chi_E$. The set of all extended complex-valued  $\mu$-measurable functions defined on $R$ will be denoted by $M(R, \mu)$, its subsets of all non-negative functions \footnote{That is, functions whose values are non-negative real numbers.} and functions finite $\mu$-almost everywhere on $R$ will be denoted by $M_+(R, \mu)$ and $M_0(R, \mu)$ respectively. As usual, we identify functions that are equal $\mu$-almost everywhere. For brevity, we will usually abbreviate $\mu$-almost everywhere to $\mu$-a.e.\ and simply write $M$, $M_+$ and $M_0$, instead of $M(R, \mu)$, $M_+(R, \mu)$ and $M_0(R, \mu)$ respectively, whenever there is no risk of confusion.

After preliminaries, we will restrict ourselves to the case $R = (0, \infty)$ and so we will denote the $1$-dimensional Lebesgue measure by $\lambda$.

\subsection{Non-increasing rearrangement} \label{SSNR}

In this section, we define the non-increasing rearrangement of a function and some related terms. We proceed in accordance with \cite[Chapter~2]{BennettSharpley88}. We first define the distribution function.

\begin{definition}
	The distribution function $\mu_f$ of a function $f \in M$ is defined for $s \in [0, \infty)$ by
	\begin{equation*}
		\mu_f(s) = \mu(\{ t \in R \vert f(t) > s \}).
	\end{equation*}
\end{definition}

We now define the non-increasing rearrangement as the generalised inverse of the distribution function.

\begin{definition} \label{DNIR}
	The non-increasing rearrangement $f^*$ of function $f \in M$ is defined for $t \in [0, \infty)$ by
	\begin{equation*}
		f^*(t) = \inf \{ s \in [0, \infty) \vert \mu_f(s) \leq t \}.
	\end{equation*}
\end{definition}

Some basic properties of distribution function and non-increasing rearrangement, with proofs, can be found in \cite[Chapter~2,~Proposition~1.3]{BennettSharpley88} and \cite[Chapter~2,~Proposition~1.7]{BennettSharpley88}. We now define what it means for functions to be equimeasurable.

\begin{definition}
	We say that two functions $f \in M(R, \mu)$, $g \in M(S, \nu)$ are equimeasurable if $\mu_f = \mu_g$.
\end{definition}

It is an easy exercise to prove that $f, g \in M$ are euqimeasurable if and only if also $f^* = g^*$.

A very important classical result is the Hardy-Littlewood inequality which we list below. For details, see for example \cite[Chapter~2,~Theorem~2.2]{BennettSharpley88}.

\begin{theorem} \label{THLI}
	It holds for all $f, g \in M$ that
	\begin{equation*}
		\int_R \lvert f \cdot g \rvert \: d\mu \leq \int_0^{\infty} f^* g^* \: d\lambda.
	\end{equation*}
\end{theorem}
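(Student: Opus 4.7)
My plan is to reduce both sides of the inequality to a single symmetric double-integral expression via the layer-cake formula, and then exploit the fact that level sets of a non-increasing function on $(0,\infty)$ are half-open intervals starting at~$0$. First I would apply the pointwise identity $|h(x)| = \int_0^\infty \chi_{\{|h|>s\}}(x) \, ds$ to both $|f|$ and~$|g|$, write
\begin{equation*}
\int_R |fg| \, d\mu = \int_R \!\!\left( \int_0^\infty \!\chi_{\{|f|>s\}}(x) \, ds \right) \!\!\left( \int_0^\infty \!\chi_{\{|g|>t\}}(x) \, dt \right) d\mu(x),
\end{equation*}
and then invoke Tonelli's theorem (legitimate because $(R,\mu)$ is $\sigma$-finite and the integrand is non-negative) to interchange the order of integration, obtaining
\begin{equation*}
\int_R |fg| \, d\mu = \int_0^\infty\!\!\int_0^\infty \mu\bigl(\{|f|>s\}\cap\{|g|>t\}\bigr) \, ds \, dt.
\end{equation*}
The trivial bound $\mu(A\cap B) \leq \min(\mu(A), \mu(B))$ then yields an upper estimate by $\int_0^\infty\!\int_0^\infty \min(\mu_f(s),\mu_g(t)) \, ds \, dt$.

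The second step is to show that the right-hand side of the theorem equals exactly this double integral. Since $f^*$ is non-increasing, the definition in Definition~\ref{DNIR} gives $f^*(u) > s$ iff $\mu_f(s) > u$, so the level set $\{u \in (0,\infty) : f^*(u) > s\}$ is precisely the interval $[0, \mu_f(s))$, of Lebesgue measure~$\mu_f(s)$; analogously for $g^*$. Applying the layer-cake formula and Tonelli (now on $((0,\infty),\lambda)$) to $\int_0^\infty f^* g^* \, d\lambda$, I would get
\begin{equation*}
\int_0^\infty f^* g^* \, d\lambda = \int_0^\infty\!\!\int_0^\infty \lambda\bigl([0,\mu_f(s))\cap[0,\mu_g(t))\bigr) \, ds \, dt = \int_0^\infty\!\!\int_0^\infty \min(\mu_f(s), \mu_g(t)) \, ds \, dt,
\end{equation*}
and comparing with the estimate from the previous paragraph closes the argument.

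The proof is essentially routine once the layer-cake representation is in hand; the mildly delicate point is that two applications of Tonelli must be justified, and one has to confirm measurability of the two-variable functions $(x,s)\mapsto \chi_{\{|f|>s\}}(x)$ and its $g$-counterpart, which is standard under $\sigma$-finiteness of the underlying measure. An alternative route would go via the usual three-step scheme (characteristic functions, then simple functions by bilinearity of the integral, then general measurable $f,g$ by monotone convergence), but the layer-cake approach avoids the combinatorics of rearranging level sets of simple functions and gives the result in essentially one computation.
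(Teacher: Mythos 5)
Your proposal is correct, and it is essentially the standard argument behind this classical result: the paper itself gives no proof but quotes it from \cite{BennettSharpley88} (Chapter~2, Theorem~2.2), where the proof is precisely this layer-cake reduction of both sides to $\int_0^\infty\!\int_0^\infty \min(\mu_f(s),\mu_g(t))\,ds\,dt$ via Tonelli and the bound $\mu(A\cap B)\le\min(\mu(A),\mu(B))$. The only step to state more carefully is the equivalence $f^*(u)>s \iff \mu_f(s)>u$: it relies on the right-continuity of the distribution function (equivalently, the equimeasurability of $f$ and $f^*$, cf.\ \cite[Chapter~2, Proposition~1.7]{BennettSharpley88}), not merely on $f^*$ being non-increasing, though this is standard and does not affect the validity of your argument.
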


As an immediate consequence, we get that, for all $f, g \in M$,
\begin{equation*}
	\sup_{\tilde{g} \in M, \tilde{g}^* = g^*} \int_R \lvert f \cdot \tilde{g} \rvert \: d\mu \leq \int_0^{\infty} f^* g^* \: d\lambda.
\end{equation*}
This leads to the definition of resonant measure spaces.\footnote{There is also a stronger version of resonance, which we omit since it will not be used. For details, see \cite[Chapter~2,~Definition~2.3]{BennettSharpley88}.}

\begin{definition}
	A sigma-finite measure space $(R, \mu)$ is said to be resonant if it holds for all $f, g \in M(R, \mu)$ that
	\begin{equation*}
		\sup_{\tilde{q} \in M(R, \mu), \tilde{q}^* = g^*} \int_R \lvert f \cdot \tilde{g} \rvert \: d\mu = \int_0^{\infty} f^* g^* \: d\lambda.
	\end{equation*}
\end{definition}

Characterization of resonant measure spaces can be found in \cite[Chapter~2,~Theorem~2.7]{BennettSharpley88}. For our purpose, a simple sufficient condition is enough.

\begin{theorem} \label{CharRes}
If the measure $\mu$ is non-atomic, then the measure space $(R, \mu)$ is resonant.
\end{theorem}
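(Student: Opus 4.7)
The inequality $\le$ in the definition of resonance is a direct consequence of the Hardy--Littlewood inequality (Theorem~\ref{THLI}): for any $\tilde g \in M(R,\mu)$ with $\tilde g^\ast = g^\ast$ one already has $\int_R |f \tilde g|\,d\mu \le \int_0^\infty f^\ast g^\ast\,d\lambda$, and the supremum preserves this bound. The real task is therefore to produce, for given $f, g \in M(R,\mu)$, a sequence of rearrangements $\tilde g_k$ of $g$ whose pairings with $f$ approach that upper bound. Replacing $f$ by $|f|$ (and $g$ by $|g|$, which leaves $g^\ast$ unchanged) I may assume throughout that $f, g \ge 0$.

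The plan is to first settle the case in which $f$ is simple, say $f = \sum_{i=1}^n a_i \chi_{E_i}$ with $a_1 > \cdots > a_n > 0$ and pairwise disjoint $E_i$ of finite measure; set $E_0 = R \setminus \bigcup_{i=1}^n E_i$, $t_0 = 0$, $t_k = \sum_{i \le k} \mu(E_i)$, so that $f^\ast = \sum_{i=1}^n a_i \chi_{[t_{i-1}, t_i)}$. Suppose one can construct $\tilde g \ge 0$ on $R$ whose restriction to each $E_i$ ($i=1,\dots,n$) is equimeasurable with $g^\ast$ on the corresponding interval $[t_{i-1},t_i)$, and whose restriction to $E_0$ is equimeasurable with $g^\ast|_{[t_n,\mu(R))}$. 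Since the $E_i$ partition $R$, the distribution function of $\tilde g$ then coincides with that of $g^\ast$, so $\tilde g$ is a rearrangement of $g$; moreover
\[
\int_R f \tilde g\,d\mu \;=\; \sum_{i=1}^n a_i \int_{E_i} \tilde g\,d\mu \;=\; \sum_{i=1}^n a_i \int_{t_{i-1}}^{t_i} g^\ast\,d\lambda \;=\; \int_0^\infty f^\ast g^\ast\,d\lambda,
\]
giving equality in the simple case.

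The construction of these local rearrangements is the main technical obstacle, and is precisely where non-atomicity enters. On a non-atomic sigma-finite measure space (here each $(E_i, \mu|_{E_i})$), a classical Lyapunov-type argument---iterated bisection of $E_i$ into measurable pieces of any prescribed measures summing to $\mu(E_i)$, which is possible exactly because $\mu|_{E_i}$ has no atoms---produces a measure-preserving map $\phi_i \colon (E_i, \mu|_{E_i}) \to ([t_{i-1}, t_i), \lambda)$. Setting $\tilde g = g^\ast \circ \phi_i$ on each $E_i$ makes $\tilde g|_{E_i}$ equimeasurable with $g^\ast|_{[t_{i-1},t_i)}$, as required. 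The analogous construction on $E_0$ (with target interval $[t_n, \mu(R))$, allowing $\mu(R) = \infty$) uses sigma-finiteness to reduce to a countable sequence of finite-measure pieces.

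A general non-negative $f \in M(R,\mu)$ is then handled by monotone approximation: by sigma-finiteness one chooses a sequence of simple functions $f_k$ with finite-measure supports and $f_k \nearrow f$ pointwise. For each $k$ the previous step yields a rearrangement $\tilde g_k$ of $g$ with $\int_R f_k \tilde g_k\,d\mu = \int_0^\infty f_k^\ast g^\ast\,d\lambda$. A routine distribution-function argument shows $f_k^\ast \nearrow f^\ast$, so the right-hand side tends to $\int_0^\infty f^\ast g^\ast\,d\lambda$ by monotone convergence; sandwiching
\[
\int_0^\infty f_k^\ast g^\ast\,d\lambda \;=\; \int_R f_k \tilde g_k\,d\mu \;\le\; \int_R f \tilde g_k\,d\mu \;\le\; \int_0^\infty f^\ast g^\ast\,d\lambda,
\]
where the last inequality is Hardy--Littlewood applied to $\tilde g_k$, forces $\int_R f \tilde g_k\,d\mu \to \int_0^\infty f^\ast g^\ast\,d\lambda$ and shows the supremum in the definition of resonance is attained in the limit.
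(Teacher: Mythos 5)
Your proof is correct. A point of comparison worth making explicit: the paper does not actually prove Theorem~\ref{CharRes} at all --- it is quoted as a known sufficient condition extracted from the characterization of resonant spaces in \cite[Chapter~2,~Theorem~2.7]{BennettSharpley88} --- so the relevant benchmark is the textbook argument, and yours is essentially that argument. The structure (Hardy--Littlewood for the upper bound; exact equality for non-negative simple $f$ with finite-measure level sets via a rearrangement of $g$ distributed over the sets $E_i$ according to $g^*$ on $[t_{i-1},t_i)$; monotone approximation $f_k\nearrow f$, $f_k^*\nearrow f^*$, and the sandwich with Hardy--Littlewood to pass to general $f$) is sound, and the bookkeeping showing $\mu_{\tilde g}=\mu_{g^*}$, hence $\tilde g^*=g^*$, is right. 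The one place where all the non-atomicity is actually spent is the existence of a measure-preserving map from each finite non-atomic piece $(E_i,\mu|_{E_i})$ onto the interval $[t_{i-1},t_i)$ (and its $\sigma$-finite variant on $E_0$); you invoke this as classical rather than prove it, which is acceptable --- it follows from Sierpi\'nski's theorem that a non-atomic measure attains every intermediate value, via the iterated bisection you sketch --- but be aware that this is the genuinely nontrivial ingredient, and that the argument in Bennett--Sharpley can be organized so as to avoid constructing a full measure-preserving map, by instead choosing, inside the level structure, nested sets of prescribed measure on which $|f|$ integrates to $\int_0^t f^*$; your route buys exact attainment of the supremum for simple $f$, theirs gets by with an $\varepsilon$-approximation. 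Either way the conclusion and constants are the same.
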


\subsection{Rearrangement invariant Banach function norms} \label{SSFN}

The following two definitions are adapted from \cite[Chapter~1,~Definition~1.1]{BennettSharpley88} and \cite[Chapter~2,~Definition~4.1]{BennettSharpley88} respectively.

 \begin{definition}\label{DefBFN}
	Let $\lVert \cdot \rVert : M_+ \to [0, \infty]$ be some non-negative functional on $M_+$. We then say that $\lVert \cdot \rVert$ is a Banach function norm if it satisfies the following conditions:

	\begin{enumerate}[label=\textup{(P\arabic*)}, series=P]
		\item \label{P1} $\lVert \cdot \rVert$ is a norm, i.e.
		\begin{enumerate}
			\item it is positively homogeneous, i.e.\ $\forall a \in \mathbb{C} \forall f \in M_+ \: : \: \lVert a \cdot f \rVert = \lvert a \rvert \lVert f \rVert$,
			\item it satisfies $\lVert f \rVert = 0 \Leftrightarrow f = 0$  $\mu$-a.e.,
			\item it is subadditive, i.e.\ $\forall f,g \in M_+ \: : \: \lVert f+g \rVert \leq \lVert f \rVert + \lVert g \rVert$.
		\end{enumerate}
		\item \label{P2} $\lVert \cdot \rVert$ has the lattice property, i.e.\ if some $f, g \in M_+$ satisfy $f \leq g$ $\mu$-a.e., then also $\lVert f \rVert \leq \lVert g \rVert$.
		\item \label{P3} $\lVert \cdot \rVert$ has the Fatou property, i.e.\ if  some $f_n, f \in M_+$ satisfy $f_n \uparrow f$ $\mu$-a.e., then also $\lVert f_n \rVert \uparrow \lVert f \rVert $.
		\item \label{P4} $\lVert \chi_E \rVert < \infty$ for all $E \subseteq R$ satisfying $\mu(E) < \infty$.
		\item \label{P5} For every $E \subseteq R$ satisfying $\mu(E) < \infty$ there exists some finite constant $C_E$, dependent only on $E$, such that for all $f \in M_+$ the inequality $ \int_E f \: d\mu \leq C_E \lVert f \rVert $ holds.
	\end{enumerate}
\end{definition}

There is one class of Banach function norms which will be of special interest for us, namely the rearrangement invariant Banach function norms defined bellow.

\begin{definition}
	We say that a Banach function norm $\lVert \cdot \rVert$ is rearrangement invariant, abbreviated r.i., if  it satisfies the following additional condition:
	\begin{enumerate}[resume*=P]
		\item \label{P6} If two functions $f,g \in M_+$ are equimeasurable, then $\lVert f \rVert = \lVert g \rVert$.
	\end{enumerate}
\end{definition}

While in it was convenient to define these terms only for functionals on $M_+$, their domains can be naturally expanded to whole $M$ by taking first the absolute value of given function. So we may say that $\lVert \cdot \rVert$ is, for example, Banach function norm on $M$ and mean by it that it is a functional on $M_+$ satisfying the definition above whose domain was expanded in this way. We shall do this implicitly from now on, without further reference. We may also mention the properties listed in the definitions above when talking about functionals defined on whole $M$. If we do so, we always mean that those functionals have said properties when restricted on $M_+$.

Now, having expanded the domain of $\lVert \cdot \rVert$ on whole $M$, we may define (r.i.) Banach function spaces

\begin{definition} \label{DefBFS}
	Let $\lVert \cdot \rVert_X$ be Banach function norm on $M$. Then the set
	\begin{equation*}
		X = \{ f \in M; \lVert f \rVert_X < \infty \}
	\end{equation*}
	equipped with the norm $\lVert \cdot \rVert$ will be called a Banach function space. Further, if $\lVert \cdot \rVert$ is rearrangement invariant, we shall say that $X$ is a rearrangement invariant Banach function space.
\end{definition}

Basic properties of Banach funtion norms and Banach function spaces can be found in \cite[Chapter~1,~Section~1]{BennettSharpley88}.

Important concept in the theory of Banach function spaces is the concept of associate space.

\begin{definition} \label{DefAss}
Let $\lVert \cdot \rVert_X$ be Banach function norm on $M$ and $X$ the corresponding Banach function space. Then the functional $\lVert \cdot \rVert_{X'}$ defined for every $f \in M$ by
\begin{equation*}
\lVert f \rVert_{X'} = \sup_{\lVert g \rVert_X \leq 1} \int_R \lvert fg \rvert \: d\mu
\end{equation*}
will be called the associate norm of $\lVert \cdot \rVert_X$ and and the space $X'$, defined as in Definition~\ref{DefBFS}, the associate space of $X$.
\end{definition}

Properties of associate spaces can be found in \cite[Chapter~1,~Section~2]{BennettSharpley88}. We now provide two important equalities, for details see \cite[Chapter~2,~Proposition~4.2]{BennettSharpley88}.

\begin{proposition} \label{PropAss}
Let $\lVert \cdot \rVert_X$ be a rearrangement invariant Banach function norm on $M(R, \mu)$ where $(R, \mu)$ is a resonant measure space. Then $\lVert \cdot \rVert_{X'}$ is also rearrangement invariant and we have the equalities
\begin{align*}
\lVert f \rVert_{X'} &= \sup_{\lVert g \rVert_X \leq 1} \int_0^{\infty} f^*g^* \: d\lambda, \\
\lVert g \rVert_ X &= \sup_{\lVert f \rVert_{X'} \leq 1} \int_0^{\infty} f^*g^* \: d\lambda.
\end{align*}
\end{proposition}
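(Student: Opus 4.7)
The plan is to prove the two equalities in order, since the first identity contains all the real content: once the formula for $\|f\|_{X'}$ involving only rearrangements is in hand, rearrangement invariance of $\|\cdot\|_{X'}$ is essentially a visual observation, and the second identity then follows by combining the first with the Lorentz--Luxemburg theorem $X''=X$.

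For the first identity, the inequality $\|f\|_{X'} \le \sup_{\|g\|_X \le 1}\int_0^{\infty} f^*g^*\,d\lambda$ is immediate from the Hardy--Littlewood inequality (Theorem~\ref{THLI}) applied inside the defining supremum $\|f\|_{X'} = \sup_{\|g\|_X\le 1}\int_R|fg|\,d\mu$. For the reverse inequality, I fix $f \in M$ and $g$ with $\|g\|_X \le 1$. Since $(R,\mu)$ is resonant, for every $\varepsilon>0$ I can produce $\tilde g \in M(R,\mu)$ equimeasurable with $g$ for which $\int_R|f\tilde g|\,d\mu > \int_0^{\infty} f^*g^*\,d\lambda - \varepsilon$. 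The decisive point is that rearrangement invariance of $\|\cdot\|_X$ (property~\ref{P6}) forces $\|\tilde g\|_X = \|g\|_X \le 1$, so $\int_R|f\tilde g|\,d\mu \le \|f\|_{X'}$. Letting $\varepsilon\to 0$ and then taking the supremum over admissible $g$ closes the inequality.

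Once the first identity is proved, its right-hand side depends on $f$ only through $f^*$, so $\|\cdot\|_{X'}$ is automatically rearrangement invariant. For the second identity, the Lorentz--Luxemburg theorem (a standard property of associate spaces recorded in \cite[Chapter~1,~Section~2]{BennettSharpley88}) gives $\|g\|_X = \|g\|_{X''} = \sup_{\|f\|_{X'}\le 1}\int_R|fg|\,d\mu$, which combined with Hardy--Littlewood yields $\|g\|_X \le \sup_{\|f\|_{X'}\le 1}\int_0^{\infty} f^*g^*\,d\lambda$. For the opposite direction, the first identity, applied with $h=g/\|g\|_X$ as a test function (the case $\|g\|_X=0$ being trivial), upgrades to the H\"older-type estimate $\int_0^{\infty} f^*g^*\,d\lambda \le \|f\|_{X'}\|g\|_X$, which bounds every term of the second supremum by $\|g\|_X$.

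The only genuine obstacle is the reverse direction of the first identity: one has to recognise that resonance is precisely tailored to approximate $\int_0^{\infty} f^*g^*\,d\lambda$ by integrals $\int_R|f\tilde g|\,d\mu$ over functions $\tilde g$ equimeasurable with $g$, and that the r.i.\ property \ref{P6} is exactly what lets us feed those $\tilde g$ back into the unit ball of $X$. Beyond this interplay, the proof is a routine combination of the Hardy--Littlewood inequality with the associate-space duality $X''=X$.
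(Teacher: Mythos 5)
Your proof is correct and follows essentially the same route as the argument behind the result the paper merely quotes from \cite[Chapter~2,~Proposition~4.2]{BennettSharpley88}: Hardy--Littlewood for one direction, resonance combined with the rearrangement invariance property~\ref{P6} for the other, rearrangement invariance of $\lVert \cdot \rVert_{X'}$ read off from the resulting formula, and the Lorentz--Luxemburg identity $X''=X$ together with the H\"older-type estimate for the second equality. No issues to report.
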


Another concept used in the paper is down-associate norm. To define it we need to restrict ourselves to the case $(R, \mu) = ((0, \infty), \lambda)$.

\begin{definition} \label{DefDownAss}
Let $\lVert \cdot \rVert_X$ be a rearrangement invariant Banach function norm on $M((0, \infty), \lambda)$ and $X$ the corresponding Banach function space. Then the functional $\lVert \cdot \rVert_{X'_d}$ defined for every $f \in M$ by
\begin{equation*}
\lVert f \rVert_{X'_d} = \sup_{\lVert g \rVert_X \leq 1} \int_0^{\infty} \lvert fg^* \rvert \: d\lambda
\end{equation*}
will be called the down-associate norm of $\lVert \cdot \rVert_X$ and and the space $X'_d$, defined as in Definition~\ref{DefBFS}, the down-associate space of $X$.
\end{definition}

It is fairly easy to check that the down associate norm is indeed a norm and in fact satisfies conditions~\ref{P1}--\ref{P4} from the Definition~\ref{DefBFS} of the Banach function spaces. Furthermore, it is obvious that $\lVert f \rVert_{X'_d} \leq \lVert f \rVert_{X'}$  for any $f \in M((0, \infty), \lambda)$, since the supremum in the definition above is essentially the supremum from Definition~\ref{DefAss} of the associate space but taken only over non-increasing functions $g$, and therefore we always have the embedding $X' \hookrightarrow X'_d$.

To show a concrete example one can use the characterisation of embeddings of classical Lorentz spaces proved in \cite[Remark~(i),~p.~148]{Sawyer90} and summarized in \cite[Theorem~3.1]{CarroPick00} to obtain
\begin{align*}
\lVert f \rVert_{(L^1)'_d} &= \sup_{t \in (0, \infty)} \frac{1}{t} \int_0^{t} f(s) \: ds, \\
 \lVert f \rVert_{(L^p)'_d} &\approx \left ( \int_0^{\infty} \left ( \frac{1}{t} \int_0^{t} f(s) \:ds \right )^{\frac{1}{p-1}}f(t) \: dt \right )^{\frac{p-1}{p}}, & \text{where } p \in (1, \infty),
\end{align*}
for any $f \in M_+$, where the symbol $\approx$ means that the ratio of left and right hand sides is bounded between two positive constants depending only on $p$. In the latter case,  \cite[Theorem~1]{Sawyer90} also provides an equivalent and perhaps nicer expression
\begin{align*}
 \lVert f \rVert_{(L^p)'_d} \approx \left \lVert \int_t^{\infty} \frac{f(s)}{s} \: ds \right \rVert_{L^{\frac{p}{p-1} }} & \text{where } p \in (1, \infty).
\end{align*}
\subsection{Operators}

To conclude this section we list two basic definitions concerning operators.

\begin{definition}
Let $T:M_{+} \rightarrow M_{+}$  be an sublinear operator. Given rearrangement invariant Banach function spaces $X$ and $Y$, we say that $T$ is bounded from $X$ to $Y$, and write
\begin{equation*}
T:X \rightarrow Y
\end{equation*}
if the quantity
\begin{equation*}
\lVert  T \rVert = \sup \lbrace \lVert Tf \rVert _{Y} \vert f \in X \cap M_{+}, \lVert f \rVert_X \leq 1 \rbrace
\end{equation*}
is finite. $\lVert T \rVert$ is then said to be the norm of $T$.

\end{definition}

\begin{definition}
Let $T$ and $T'$ be two operators from $M_{+}$ into $M_{+}$. We say that $T$ and $T'$ are mutually associate, if

\begin{equation*}
\int_R Tf \cdot g  \: d\mu = \int_R f(s) \cdot T'g  \: d\mu
\end{equation*}
for all $f, g \in M_{+}$.
\end{definition}

\section{The fall of the star}

On the following pages, we adapt the methods used by Cianchi, Pick and Slav{\' i}kov{\' a} in \cite[Section~9]{CianchiPick15} to extend some of their result to the case when the underlying measure space is of infinite measure.

As foreshadowed before, we restrict ourselves to the case $R = (0, \infty)$ and $\mu = \lambda$.

\subsection{Auxiliary statements}

\begin{definition} \label{BDef}
Let $I:(0, \infty) \rightarrow (0, \infty)$ be a non-decreasing function. We define the operators $R_{I}$ and $H_{I}$ from $M_{+}$ into  $M_{+}$ by
\begin{align*}
R_{I}f(t) &= \frac{1}{I(t)} \int_{0}^{t} f(s)  \: ds  & \text{for} \  t \in (0, \infty), \\
H_{I}f(t) &= \int_{t}^{\infty}  \frac{f(s)}{I(t)}  \: ds & \text{for} \  t \in (0, \infty),
\end{align*}
where $f \in M_{+}$. Furthermore, for $m \in \mathbb{N}$, we set
\begin{align*}
R_{I}^m &= \underbrace{R_I \circ R_I \circ \dotsb \circ R_I}_{m \text{-times}} & \text{and}   & &  H_{I}^m = \underbrace{H_I \circ H_I \circ \dotsb \circ H_I}_{m \text{-times}}.
\end{align*}
We also formally set $R_I^0$ and $H_I^0$ to be the identity operator on $M_{+}$.

\end{definition}

Some basic properties of these operators are listed in the following proposition. The proof is easy and therefore omitted.

\begin{proposition} \label{BProp} \ \\
\begin{enumerate}
\item \label{BProp1} The operators $R_I^m$ and $H_I^m$ are mutually associate for all $m \in \mathbb{N} \cup \{ 0 \}$.
\item  For every $f \in M_{+}$, every $m \in \mathbb{N}$ and every $t \in (0,\infty)$ holds, that
\begin{equation}
 R_I^mf(t) \leq R_I^mf^*(t). \label{BProp2}
\end{equation}
\item  For every $f \in M_{+}$, every $m \in \mathbb{N}$ and every $t \in (0,\infty)$ holds, that
\begin{align}
R_I^mf(t) &= \frac{1}{(m-1)!}\frac{1}{I(t)} \int_0^{t} f(s) \bigg( \int_s^t \frac{1}{I(r)} \: dr\bigg)^{m-1}  \: ds \label{BProp3},\\
H_I^mf(t) &= \frac{1}{(m-1)!} \int_t^{\infty} \frac{f(s)}{I(s)} \bigg( \int_t^s \frac{1}{I(r)} \: dr\bigg)^{m-1}  \:  ds \label{BProp4}.
\end{align}
\item \label{BProp5} The operators $R_I^m$ and $H_I^m$ are monotone for every $m \in \mathbb{N}$, in the sense that if $f \leq g$ a.e.\ then also $R_I^mf \leq R_I^mg $ and $H_I^mf \leq H_I^mg$.
\end{enumerate}
\end{proposition}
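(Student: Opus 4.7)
The plan is to handle the four statements essentially in the order (iii), (i), (ii), (iv), since the explicit formulas in (iii) make the other parts transparent.

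First I would establish (iii) by induction on $m$. For $m=1$ the formulas reduce to the definitions (with the conventions $0!=1$ and $x^0=1$). Assuming the formula for $m-1$, one writes $R_I^m f = R_I(R_I^{m-1}f)$, substitutes the inductive formula, and exchanges the order of integration by the Tonelli theorem (legitimate since every integrand is non-negative). The inner integral $\int_s^t \frac{1}{I(u)}\bigl(\int_s^u \frac{dr}{I(r)}\bigr)^{m-2}du$ is then evaluated via the substitution $v = \int_s^u \frac{dr}{I(r)}$, yielding $\frac{1}{m-1}\bigl(\int_s^t \frac{dr}{I(r)}\bigr)^{m-1}$; multiplying by $\frac{1}{(m-2)!}$ gives the advertised coefficient $\frac{1}{(m-1)!}$. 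The calculation for $H_I^m$ is entirely symmetric.

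For (i) the base case $m=1$ is a direct computation: for $f,g\in M_+$, one swaps the order of integration in $\int_0^\infty R_I f(t)\,g(t)\,dt$ via Tonelli and arrives at $\int_0^\infty f(s) H_I g(s)\,ds$. The case $m=0$ is trivial. For general $m$ one then chains the relation: assuming $R_I^{m-1}$ and $H_I^{m-1}$ are mutually associate, one writes
\[
\int_0^\infty R_I^m f\cdot g\,ds=\int_0^\infty R_I^{m-1}(R_If)\cdot g\,ds=\int_0^\infty R_If\cdot H_I^{m-1}g\,ds=\int_0^\infty f\cdot H_I^m g\,ds.
\]

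For (ii) I would use the formula from (iii). Fixing $t$, the kernel $\phi_t(s)=\bigl(\int_s^t \frac{dr}{I(r)}\bigr)^{m-1}$ is non-negative and non-increasing in $s$ on $(0,t)$, because $I>0$. Extending $\phi_t$ by zero to $(0,\infty)$ keeps it non-increasing and non-negative, so $\phi_t^*=\phi_t$ a.e. Applying Theorem~\ref{THLI} (the Hardy--Littlewood inequality) to $f$ and $\phi_t$ then gives
\[
\int_0^t f(s)\phi_t(s)\,ds=\int_0^\infty f(s)\phi_t(s)\,ds\leq\int_0^\infty f^*(s)\phi_t(s)\,ds=\int_0^t f^*(s)\phi_t(s)\,ds,
\]
and dividing by $(m-1)!\,I(t)$ yields \eqref{BProp2}. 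Finally, (iv) is immediate from the integral representations in (iii): a pointwise inequality $f\le g$ a.e.\ is preserved under integration against the non-negative kernels appearing in \eqref{BProp3} and \eqref{BProp4}.

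No step presents a real obstacle; the only mildly delicate point is making sure the Hardy--Littlewood inequality in the proof of (ii) is applied to $\phi_t$ extended to all of $(0,\infty)$, so that one may legitimately identify $\phi_t^*$ with $\phi_t$.
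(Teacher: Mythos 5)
Your proposal is correct, and for parts (i), (iii) and (iv) it runs along essentially the same lines as the paper's (omitted) argument: induction on $m$, Tonelli/Fubini to exchange the order of integration, evaluation of the inner integral $\int_s^t \frac{1}{I(u)}\bigl(\int_s^u \frac{dr}{I(r)}\bigr)^{m-2}\,du$ by treating $u \mapsto \int_s^u \frac{dr}{I(r)}$ as a primitive of $\frac{1}{I}$ (the paper justifies this by splitting into the countably many intervals where $I$ is continuous, you by the change of variables for the absolutely continuous monotone function $v=\int_s^u\frac{dr}{I(r)}$ -- the same point), and deducing (iv) from the non-negativity of the kernels. The one place where you genuinely deviate is \eqref{BProp2}: the paper proves it by induction, invoking Theorem~\ref{THLI} only for the base case $m=1$ and then using that an a.e.\ inequality between $R_I^m f$ and $R_I^m f^*$ survives the outer integration defining $R_I^{m+1}$; you instead fix $t$, use the already-established formula \eqref{BProp3}, and apply Hardy--Littlewood once to $f$ and the kernel $\phi_t(s)=\bigl(\int_s^t\frac{dr}{I(r)}\bigr)^{m-1}\chi_{(0,t)}(s)$, using that $\phi_t$ is non-negative and non-increasing on $(0,\infty)$ so that $\phi_t^*=\phi_t$ a.e. Both are valid; the paper's induction is slightly more economical (no need to identify $\phi_t^*$, and no dependence on (iii)), while your version is a clean one-shot argument that makes explicit exactly which rearranged kernel is being exploited, and you correctly flag the only delicate point, namely extending $\phi_t$ by zero before identifying it with its rearrangement.
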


In the next lemma we state the critical property of the operator $R^m_I$ that is at the heart of our proof. The proof is identical to the one in \cite[Lemma~9.1]{CianchiPick15}.

\begin{lemma} \label{LemmaR}
 It holds for all $f \in M_+$, all $m \in \mathbb{N} \cup \{ 0 \}$ and all $t \in (0, \infty)$ that
\begin{align}
R_I^mf^*(t) &\leq 2^{m} R_I^mf^*(s) & \text{if} \  \frac{t}{2} \leq s \leq t. \label{LemmaR1}
\end{align}
Consequently, for every $f \in M_+$
\begin{align}
(d - c)R_I^mf^*(d) &\leq 2^{m+1} \int_c^d R_I^mf^*(t) \:dt & \text{for any} \  0 \leq c \leq d < \infty. \label{LemmaR2}
\end{align}
\end{lemma}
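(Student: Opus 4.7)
The plan is to prove \eqref{LemmaR1} by induction on $m$, driven by the dilation $u \mapsto (t/s)u$ inside the integral defining $R_I$, and then to deduce \eqref{LemmaR2} from \eqref{LemmaR1} by a short case distinction on whether $c$ lies inside $[d/2, d]$ or not.

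For the base case $m = 0$ the operator $R_I^0$ is the identity, so $R_I^0 f^* = f^*$ is non-increasing and the assertion holds trivially with constant $2^0 = 1$. For the inductive step, assume the inequality at level $m$, fix $t > 0$ with $s \in [t/2, t]$, and start from
\[
R_I^{m+1} f^*(t) = \frac{1}{I(t)} \int_0^t R_I^m f^*(u) \, du.
\]
Substitute $u = (t/s) v$, which converts the integration range to $v \in [0, s]$ and contributes a Jacobian $t/s \in [1, 2]$. For each $v \in (0, s]$ the point $v' := (t/s) v$ satisfies $v/v' = s/t \in [1/2, 1]$, so the inductive hypothesis applied at level $m$ (with $v'$ playing the role of $t$ and $v$ that of $s$) gives $R_I^m f^*(v') \leq 2^m R_I^m f^*(v)$. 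Combining this pointwise bound with the Jacobian, the monotonicity inequality $1/I(t) \leq 1/I(s)$, and $t/s \leq 2$ yields
\[
R_I^{m+1} f^*(t) \leq \frac{(t/s)\cdot 2^m}{I(t)} \int_0^s R_I^m f^*(v) \, dv \leq \frac{2\cdot 2^m}{I(s)} \int_0^s R_I^m f^*(v) \, dv = 2^{m+1} R_I^{m+1} f^*(s),
\]
which closes the induction.

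To obtain \eqref{LemmaR2}, apply \eqref{LemmaR1} with the upper point taken to be $d$. If $c \geq d/2$, then every $t \in [c, d] \subseteq [d/2, d]$ satisfies $R_I^m f^*(d) \leq 2^m R_I^m f^*(t)$, and integrating over $[c, d]$ gives the claim at once (in fact with the smaller constant $2^m$). If instead $c < d/2$, then $[d/2, d] \subseteq [c, d]$, and integrating the same pointwise bound over $[d/2, d]$ produces $(d/2) R_I^m f^*(d) \leq 2^m \int_c^d R_I^m f^*(t) \, dt$; since $d - c \leq d$, doubling completes the argument.

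I do not foresee a serious obstacle: the entire proof is propelled by the single dilation $u = (t/s) v$, and the exponent $m$ in the constant $2^m$ merely counts the $m$ Jacobian factors of $t/s \leq 2$ that accumulate through the induction. The hypothesis that $I$ is non-decreasing is used only to secure $I(s) \leq I(t)$, which is precisely what is needed so that upgrading $1/I(t)$ to $1/I(s)$ costs nothing.
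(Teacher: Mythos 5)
Your proof is correct and follows essentially the same route as the paper's: an induction on $m$ exploiting a dilation of the integration variable together with the monotonicity of $I$ for \eqref{LemmaR1}, and then integration of the pointwise bound from \eqref{LemmaR1} over a half-length portion of $[c,d]$ adjacent to $d$ for \eqref{LemmaR2}. The only cosmetic differences are that the paper dilates by the fixed factor $2$ (and then uses $t/2\le s$) where you dilate by $t/s$, and that it avoids your case distinction by integrating over $[(c+d)/2,d]$ directly.
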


To prove the main result, we will need to utilize one additional  operator.

\begin{definition} \label{DefG}
Let $I: (0, \infty) \rightarrow (0, \infty)$ be a non-decreasing function, let $m \in \mathbb{N}$ and let $R_I^m$ be the operator defined in Definition~\ref{BDef}, then we define operator $G_I^m$ for every $f \in M_+$ by:
\begin{align*}
G_I^mf(t) &= \sup_{s \geq t} R_I^mf^*(s) & \text{for} \  t \in (0, \infty).
\end{align*}
If $m=1$ then we simply denote $G_I^1$ by $G_I$.
\end{definition}

It follows immediately from Definition~\ref{DefG} that $G_I^mf \geq R_I^mf^*$ for all $f \in M_+$ and that $G_I^m$ is non-increasing, which implies that $G_I^mf = (G_I^mf)^* \geq (R_I^mf^*)^* $.

Following lemma will tell us that the quantity $\lVert G_I^mf \rVert$, where $\lVert \cdot \rVert$ represents any Banach function norm on $M_+$, does not change when we replace $I$ with its left-continuous representative. The proof is again omitted since its differences from the proof in \cite[Lemma~9.2]{CianchiPick15} are only cosmetic.

\begin{lemma} \label{LemmaG} Let $m \in \mathbb{N}$, let $I:(0, \infty) \rightarrow (0, \infty)$ be a non-decreasing function, and let  $I_0:(0, \infty) \rightarrow (0, \infty)$ be the non-decreasing left-continuous function that coincides with $I$ a.e. on $(0,\infty)$. Then for every $f \in M_+$
\begin{equation*}
G_I^mf(t)=G_{I_0}^mf(t)
\end{equation*}
for all $t \in ( (0,\infty) \setminus M)$, where $M$ is some at most countable subset of $(0,\infty)$, i.e. the equality holds $\lambda$-a.e.\ on $(0,\infty)$, and consequently, given any Banach function norm $\lVert \cdot \rVert$ on $M_+$, $\lVert G_I^mf \rVert=\lVert G_{I_0}^mf \rVert$ for every $f \in M_+$.
\end{lemma}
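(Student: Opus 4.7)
The plan is to show that $G_I^m f$ and $G_{I_0}^m f$ agree outside the (countable) set of discontinuities of $I$. Take $M$ to be the set of points at which $I$ is discontinuous. Since $I$ is non-decreasing, $M$ is at most countable, $I(t) = I_0(t)$ for every $t \in (0,\infty) \setminus M$, and, because $I_0$ collects the left limits of $I$, the inequality $I_0 \le I$ holds everywhere on $(0,\infty)$.

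The easy half of the lemma is immediate from formula \eqref{BProp3}. Because $M$ is $\lambda$-null, the integrals $\int_r^s \tfrac{1}{I(\tau)}\,d\tau$ and $\int_r^s \tfrac{1}{I_0(\tau)}\,d\tau$ coincide for every $r \le s$; combined with $1/I(s) \le 1/I_0(s)$ this gives $R_I^m f^*(s) \le R_{I_0}^m f^*(s)$ for every $s$, with equality whenever $s \notin M$. Taking $\sup_{s \ge t}$ produces $G_I^m f(t) \le G_{I_0}^m f(t)$ for every $t$.

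For the reverse inequality at points $t \notin M$, the decisive ingredient is left-continuity of $R_{I_0}^m f^*$ on $(0,\infty)$. This follows from \eqref{BProp3}: $1/I_0$ is left-continuous (since $I_0$ is left-continuous and strictly positive), and the integral $\int_0^t f^*(r)\bigl(\int_r^t \tfrac{1}{I_0(\tau)}\,d\tau\bigr)^{m-1} dr$ increases monotonically as $t \nearrow t_0$ (its domain grows and, for each fixed $r$, the inner integral is non-decreasing in $t$), so monotone convergence yields $R_{I_0}^m f^*(t) \to R_{I_0}^m f^*(t_0)$ as $t \to t_0^-$ for every $t_0 \in (0,\infty)$. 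With this in hand, fix $t \notin M$ and $s \ge t$. If $s \notin M$, then $R_{I_0}^m f^*(s) = R_I^m f^*(s) \le G_I^m f(t)$. If $s \in M$, then $s > t$, so since $M$ is countable I can choose $s_n \in (t, s) \setminus M$ with $s_n \uparrow s$; left-continuity then gives
\[
R_{I_0}^m f^*(s) = \lim_n R_{I_0}^m f^*(s_n) = \lim_n R_I^m f^*(s_n) \le G_I^m f(t).
\]
Taking the supremum over $s$ delivers $G_{I_0}^m f(t) \le G_I^m f(t)$.

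Combining the two inequalities, $G_I^m f = G_{I_0}^m f$ on $(0,\infty) \setminus M$, hence $\lambda$-a.e.\ on $(0,\infty)$; the norm identity $\lVert G_I^m f \rVert = \lVert G_{I_0}^m f \rVert$ then follows at once from the convention that Banach function norms identify $\lambda$-a.e.\ equal functions. The main obstacle I expect is the left-continuity of $R_{I_0}^m f^*$ precisely at the points of $M$, where the outer factor $1/I_0$ has a jump discontinuity — the argument only works because we have chosen the \emph{left-continuous} representative $I_0$ rather than, say, a right-continuous one.
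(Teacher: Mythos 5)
Your proof is correct, and it reaches the conclusion by a slightly different organization than the paper's argument, though both run on the same engine: left-continuity of the $I_0$-quantities plus the fact that the exceptional set is countable, hence left-dense in its complement. The paper's proof works recursively: it first shows by induction that the inner iterates satisfy $\int_0^t R_I^{m-1}f^*=\int_0^t R_{I_0}^{m-1}f^*$ for all $t$, and then proves, for an arbitrary $g\in M_+$ and $t$ outside the exceptional set, the identity $\sup_{s\ge t}\frac{1}{I(s)}\int_0^s g=\sup_{s\ge t}\frac{1}{I_0(s)}\int_0^s g$ by sandwiching each term $\frac{1}{I(s)}\int_0^s g$ between its left limit $\frac{1}{I_0(s)}\int_0^s g$ and the supremum; applying this with $g=R_{I_0}^{m-1}f^*$ gives the lemma. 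You instead invoke the closed kernel representation \eqref{BProp3}, which shows at once that $R_I^mf^*$ and $R_{I_0}^mf^*$ differ only through the outer factor, giving $R_I^mf^*\le R_{I_0}^mf^*$ everywhere with equality off $M$ (hence $G_I^mf\le G_{I_0}^mf$), and then you establish left-continuity of $R_{I_0}^mf^*$ via monotone convergence so that values at points of $M$ can be approximated from the left by values at points outside $M$, yielding the reverse inequality at $t\notin M$. What your route buys is the elimination of the induction and of the auxiliary sup-identity for generic $g$, at the cost of proving the (true, and correctly justified) left-continuity of the iterated operator; your treatment of the extended-value case and the final passage to equal norms via the lattice property are both in order.
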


Before we formulate the last necessary lemma we will introduce some new notation and one new term.

\begin{notation}
For the sake of brevity, we will from now on use the notation
\begin{equation*}
\Phi_I^m(t, s) =  \frac{\left ( \int_s^t \frac{1}{I(\tau)} \: d\tau \right )^{m-1}}{I(t)}.
\end{equation*}
\end{notation}

\begin{definition}\label{DefED}
Let $I:(0, \infty) \rightarrow (0, \infty)$ be a left-continuous non-decreasing function and fix some $m \in \mathbb{N}$. We say, that the $\Phi_I^m(t, s)$ is essentially decreasing in $t$ if the following condition holds:
\begin{equation}
\begin{gathered}
\forall s_0 \in (0, \infty) \exists t_0 \in (0, \infty) \forall t \in (t_0, \infty) \exists r_t \in (0, \infty) \forall r \in (r_t, \infty) \forall s \in (0, s_0) :\\
 \Phi_I^m(t, s) \geq \Phi_I^m(r, s). \label{DefED1}
\end{gathered}
\end{equation}
\end{definition}

We recognize that the above condition is rather complicated, but that is necessary in order for it to be as weak as possible. A much simpler but stronger conditions are for example:
\begin{enumerate}
\item \label{DefEDi} For every $s_0 \in (0, \infty)$ there is some deleted neighbourhood of infinity where the function $\Phi_I^m(t, s)$ is non-increasing with respect to $t$ for all choices of $s \in (0, s_0)$.
\item For every $s_0 \in (0, \infty)$ the function $\Phi_I^m(t, s)$ converges to $0$, as $t$ goes to infinity, uniformly for $s \in (0, s_0)$.
\end{enumerate}

Note that if $m=1$, then $\Phi_I^m(t, s)$ is simply $\frac{1}{I(t)}$ and thus essentially decreasing in $t$ for any non-decreasing left-continuous $I$. An example of functions $I$ that generate $\Phi_I^m(t, s)$ which are essentially decreasing in $t$ even for greater $m$ follows.

\begin{example}
If we put $I(t) = t^{\alpha}, \ \alpha \geq 1$, then for all $m \in \mathbb{N}$ the function $\Phi_I^m(t, s)$ is essentially decreasing in $t$ since it satisfies the condition~\ref{DefEDi}.
\end{example}

We now present the final lemma of this subsection. It is presented with proof, since it differs significantly from the one in \cite[Proposition~9.3]{CianchiPick15}. In fact, this is the part that needed the greatest modification and which motivates the Definition~\ref{DefED}.

\begin{lemma} \label{LemmaE}
	Let $m \in \mathbb{N}$, let $I:(0, \infty) \rightarrow (0, \infty)$ be a left-continuous non-decreasing function such that the function $\Phi_I^m(t, s)$ is essentially decreasing in $t$, and let $f \in M_+$ be function, for which the set $S_f = \{t \in (0, \infty)\vert f(t) > 0 \}$ has finite measure, i.e. there there exists some $s_f < \infty$ such that $\lambda (S_f) = s_f$. Then set E, defined by
	\begin{equation}
		E = \{t \in (0, \infty) \vert R_I^mf^*(t) < G_I^mf(t) \} \label{DefE}
	\end{equation}
	is an open subset of $(0, \infty)$ such that there exist at most countable collection of disjoint bounded open intervals $ \{ (c_k, d_k) \vert k \in I \subseteq \mathbb{N} \} $ in $(0, \infty)$ satisfying
	\begin{align}
		E &= \bigcup_{k \in I} (c_k, d_k), \label{PartsE} \\
		G_I^mf(t) &= R_I^mf^*(t) & \text{for} \ t \in ((0, \infty) \setminus E), \label{AssE1} \\
		G_I^mf(t) &= R_I^mf^*(d_k) & \text{if} \ t \in (c_k, d_k) \ \text{for any} \ k \in I. \label{AssE2}
	\end{align}
\end{lemma}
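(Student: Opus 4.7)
Throughout I write $R = R_I^m f^*$ and $G = G_I^m f$. By~\eqref{BProp3}, $R(t) = H(t)/((m-1)!\,I(t))$ where $H(t) = \int_0^t f^*(s)\bigl(\int_s^t 1/I(r)\,dr\bigr)^{m-1}\,ds$ is continuous in $t$ (dominated convergence), while $I$ is left-continuous and non-decreasing. For any sequence $t_n \to t_0$ one has $\liminf I(t_n) \geq I(t_0)$ by monotonicity of $I$, whence $\limsup R(t_n) \leq R(t_0)$; thus $R$ is upper semicontinuous. The envelope $G$ is non-increasing and, by splitting the defining supremum at $s = t_0$ and invoking left-continuity of $R$, left-continuous. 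Moreover, at any $t_0 \in E$ the inequality $R(t_0) < G(t_0)$ implies $G(t_0) = \sup_{s > t_0} R(s)$, and an approximation argument then gives $\lim_{t \to t_0^+} G(t) = G(t_0)$, so $G$ is in fact continuous at every point of $E$.

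Openness of $E$ follows at once: for $t_0 \in E$ with $\delta = G(t_0) - R(t_0) > 0$, upper semicontinuity of $R$ and continuity of $G$ at $t_0$ provide $\eta > 0$ such that $R(t) < R(t_0) + \delta/3 < G(t_0) - \delta/3 < G(t)$ for $|t - t_0| < \eta$, giving $(t_0 - \eta, t_0 + \eta) \subseteq E$. Hence $E$ decomposes as in~\eqref{PartsE}, and \eqref{AssE1} is immediate from the definition of $E$.

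The crucial step is to show $d_k < \infty$ for every $k$; this is where the essentially decreasing hypothesis is indispensable. Suppose for contradiction that some component has the form $(c, \infty)$. Since $f^*$ is supported in $(0, s_f)$, identity~\eqref{BProp3} gives $R(t) = \frac{1}{(m-1)!}\int_0^{s_f} f^*(s)\,\Phi_I^m(t,s)\,ds$ for every $t \geq s_f$. Applying Definition~\ref{DefED} with $s_0 = s_f$ and integrating against $f^*$, we obtain $t_0$ such that for every $t > t_0$ there is $r_t$ with $R(t) \geq R(r)$ for all $r > r_t$; enlarging $r_t$ if necessary we may assume $r_t \geq t$. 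Therefore $\sup_{s > r_t} R(s) \leq R(t) \leq \sup_{s \in [t, r_t]} R(s)$, which combined with $G(t) = \sup_{s \geq t} R(s)$ yields $G(t) = \sup_{s \in [t, r_t]} R(s)$. Because $R$ is upper semicontinuous on the compact interval $[t, r_t]$, this supremum is attained at some $s^* \in [t, r_t]$. Choose $t \in (c, \infty)$ with $t > t_0$; the equality $R(s^*) = G(t) > R(t)$ forces $s^* > t$, so $s^* \in (c, \infty) \subseteq E$, which produces the impossible chain $R(s^*) < G(s^*) \leq G(t) = R(s^*)$. Hence all components are bounded.

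To finish, I establish \eqref{AssE2} by proving $G$ is constant on each $(c_k, d_k)$. Assume for contradiction $t_1 < t_2$ in $(c_k, d_k)$ with $G(t_1) > G(t_2)$, and pick $s_n \geq t_1$ with $R(s_n) \to G(t_1)$. If infinitely many $s_n$ belong to $[t_2, \infty)$ then $G(t_2) \geq G(t_1)$, a contradiction; otherwise, passing to a subsequence, $s_n \in [t_1, t_2)$ and $s_n \to s^* \in [t_1, t_2]$, and upper semicontinuity of $R$ gives $R(s^*) \geq G(t_1)$. If $s^* \in (t_1, t_2)$ then $s^* \in E$ forces $R(s^*) < G(s^*) \leq G(t_1) \leq R(s^*)$; if $s^* = t_2$ then $R(t_2) \geq G(t_1) > G(t_2) \geq R(t_2)$; if $s^* = t_1$ then $R(t_1) \geq G(t_1) > R(t_1)$ by $t_1 \in E$. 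Each case is a contradiction, so $G$ is constant on $(c_k, d_k)$; by left-continuity of $G$ at $d_k$ this constant equals $G(d_k)$, which in turn equals $R(d_k)$ since $d_k \notin E$. This proves \eqref{AssE2}.
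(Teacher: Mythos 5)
Your overall strategy is the same as the paper's (upper semicontinuity of $R_I^mf^*$, reduction of the supremum defining $G_I^mf$ to a compact interval via the essentially decreasing hypothesis, attainment of that supremum, and a contradiction ruling out unbounded components), but there is one genuine gap: you never address the possibility that $R_I^mf^*$, and hence $G_I^mf$, takes the value $+\infty$. Nothing in the hypotheses prevents this: $f$ is only assumed to have support of finite measure, so for instance $f(t)=t^{-1}\chi_{(0,1)}(t)$ gives $R_If^*\equiv\infty$. Your very first claim, that $H(t)=\int_0^t f^*(s)\bigl(\int_s^t \frac{1}{I(r)}\,dr\bigr)^{m-1}\,ds$ is continuous ``by dominated convergence'', silently presupposes an integrable dominating function; since $H$ is non-decreasing in $t$, it could a priori be finite at some $t_0$ and equal to $+\infty$ for every $t>t_0$, in which case $H$ is not right-continuous, $R_I^mf^*$ fails to be upper semicontinuous at $t_0$, and the subsequent steps (openness of $E$, attainment of the supremum on $[t,r_t]$, the constancy argument) lose their footing. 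Similarly, if $G_I^mf(t_0)=\infty$ while $R_I^mf^*(t_0)<\infty$, the arithmetic $\delta=G(t_0)-R(t_0)$ and $R(t_0)+\delta/3<G(t_0)-\delta/3$ is meaningless. What is missing is precisely the paper's opening step: the dichotomy that if $R_I^mf^*(t)=\infty$ at one point then $R_I^mf^*\equiv\infty$ (so that $E=\emptyset$ and the lemma is trivial), proved by noting that $\int_a^t f^*(s)\bigl(\int_s^t \frac{1}{I(r)}\,dr\bigr)^{m-1}\,ds<\infty$ for every $a>0$, so any divergence comes from $s$ near $0$ and is shared by all $t$. Once one reduces to the case where $R_I^mf^*$ and $G_I^mf$ are finite everywhere, your dominated-convergence and (semi)continuity claims become correct. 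So the omission is localized and repairable, but as written the proof is incomplete exactly where the paper is most careful.

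Apart from this, the argument is sound and close to the paper's. Two smaller points: in the exclusion of unbounded components you should also arrange $t_0\geq s_f$ (not only $r_t\geq t$), since the representation of $R_I^mf^*$ through $\Phi_I^m$ via \eqref{BProp3} is only valid for arguments $\geq s_f$, and it is needed for both $t$ and $r$; and your justification $\liminf I(t_n)\geq I(t_0)$ ``by monotonicity'' in fact also needs the left-continuity of $I$ (monotonicity alone controls only sequences approaching from the right). Where you do diverge from the paper is in the bookkeeping: you obtain openness of $E$ and \eqref{AssE2} from continuity of $G_I^mf$ on $E$, left-continuity of $G_I^mf$, and a constancy argument on each component, whereas the paper works directly with a point $c_t\geq d_k$ at which the supremum is attained; both routes are fine once the finiteness issue is settled, and neither is substantially shorter than the other.
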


\begin{proof}
At first we shall prove three crucial properties of the function $R_I^mf^*$, namely:
\begin{enumerate}
\item \label{LemmaE1} If $R_I^mf^*(t) = \infty$ for any $t \in (0,\infty)$, then $R_I^mf^*(t) = \infty$ for all $t \in (0,\infty)$.
\item \label{LemmaE2} There is a $t_0 \geq s_f$ such that for every $t \geq t_0$ there is some $r_t \geq t$ satisfying that for all $r \geq r_t$ the inequality $R_I^mf^*(t) \geq R_I^mf^*(r)$ holds.
\item \label{LemmaE3} $R_I^mf^*$ is upper semi-continuous and hence attains its supremum over every closed interval.
\end{enumerate}

To prove \ref{LemmaE1}, note that, thanks to $f^*$ being non-increasing, the quantity $\int_a^t f^*(s) \big( \int_s^t \frac{1}{I(r)} \: dr\big)^{m-1} \: ds$ is finite for any $a>0$ and any $t\in [a, \infty)$, since
\begin{equation*}
\begin{split}
\int_a^t f^*(s) \bigg( \int_s^t \frac{1}{I(r)} \: dr\bigg)^{m-1} \: ds &\leq \int_a^t f^*(a) \bigg( \int_s^t \frac{1}{I(r)} \: dr\bigg)^{m-1} \: ds \\
 &= f^*(a) \int_a^t \bigg( \int_s^t \frac{1}{I(r)} \: dr\bigg)^{m-1} \: ds
\end{split}
\end{equation*}
which is finite, because $\big( \int_s^t \frac{1}{I(r)} \: dr\big)^{m-1}$ is continuous for $s \in [a, t]$. Hence, if $\int_0^t f^*(s) \big( \int_s^t \frac{1}{I(r)} \: dr\big)^{m-1} \: ds = \infty$ for any $t \in (0, \infty)$, it is also infinite for all $t \in (0, \infty)$. Conclusion~\ref{LemmaE1} follows, because, by \eqref{BProp3}, $R_I^mf^*(t)$ is in any $t \in (0, \infty)$ only this quantity multiplied by finite number, and therefore it follows the same rule.

As for~\ref{LemmaE2}, observe that since $f=0$ everywhere outside of set $S_f$ (of measure $s_f$), we have, by Definition~\ref{DNIR}, that $f^*(t) = 0$ for all $t \in [s_f, \infty)$. Hence, we can express $R_I^mf^*(t)$ in any $t \in [s_f, \infty)$ by \eqref{BProp3} to get
\begin{equation*}
\begin{split}
R_I^mf^*(t) &= \frac{1}{(m-1)!} \frac{1}{I(t)} \int_0^t f^*(s) \bigg( \int_s^t \frac{1}{I(r)} \: dr\bigg)^{m-1} \: ds \\
  &= \frac{1}{(m-1)!} \int_0^{s_f} f^*(s) \bigg( \int_s^t \frac{1}{I(r)} \: dr\bigg)^{m-1}\frac{1}{I(t)} \: ds.
\end{split}
\end{equation*}
We now see that in order to get $R_I^mf^*(t) \geq R_I^mf^*(r)$ it suffices to have $\Phi_I^m(t, s) \geq \Phi_I^m(r, s)$ for all $s \in (0, s_f)$, so to obtain the required $t_0$ we need only to use the assumption that $\Phi_I^m(t, s)$ is essentially decreasing in $t$.

The function $R_I^mf^*$ is upper semi-continuous simply because $IR_I^mf^*$ is continuous and $\frac{1}{I}$ is upper semi-continuous, since $I$ is non-decreasing and left-continuous and thus lower semi-continuous. That $R_I^mf^*$ attains its supremum over every compact set, specially over every closed interval, is simple consequence.

As an immediate consequence of~\ref{LemmaE2}, it holds for every $t \in (0, \infty)$ that in order to be unbounded on $[t, \infty)$, $R_I^mf^*$ has to be unbounded on either $[t, r_t]$ or $[t, r_{t_0}]$, depending on whether $t \geq t_0$ or not. So whenever $G_I^mf(t) = \infty$, then $R_I^mf^*$ is unbounded on some closed interval and therefore we have by~\ref{LemmaE3} that there exists some point $s \in [t, \infty)$ such that $R_I^mf^*(s) = \infty$ and thus, by~\ref{LemmaE1}, both $R_I^mf^*$ and $G_I^mf$ are identically equal to $\infty$ and there is nothing to prove ($E = \emptyset$). We may therefore assume $G_I^mf(t) <\infty$ on $(0, \infty)$.

We now distinguish two cases. Firstly, suppose $t \geq t_0$. Then, by~\ref{LemmaE2}, we get $G_I^mf(t) = \sup_{s \in [t, \infty)} R_I^mf^*(s) = \sup_{s \in [t, r_t]} R_I^mf^*(s)$ which is attained by~\ref{LemmaE3}. On the other hand, if $t < t_0$, then
\begin{equation*}
\begin{split}
G_I^mf(t) &= \sup_{s \in [t, \infty)} R_I^mf^*(s) = \max \left \{ \sup_{s \in [t, t_0]} R_I^mf^*(s), \sup_{s \in [t_0, \infty)} R_I^mf^*(s) \right \} \\
&= \max \left \{ \sup_{s \in [t, t_0]} R_I^mf^*(s), \sup_{s \in [t_0, r_{t_0}]} R_I^mf^*(s) \right \} = \sup_{s \in [t, r_{t_0}]} R_I^mf^*(s)
\end{split}
\end{equation*}
which is again attained by~\ref{LemmaE3}. Thus in either case we get that for every $t \in (0, \infty)$ there is some $c_t \geq t$ such that $G_I^mf(t) = R_I^mf^*(c_t)$. We remark that it holds for such $c_t$ that $G_I^mf(c_t) = R_I^mf^*(c_t)$ since
\begin{equation*}
 R_I^mf^*(c_t) = G_I^mf(t) \geq G_I^mf(c_t) \geq R_I^mf^*(c_t).
\end{equation*}

Suppose now that $t \in E$. Then, by \eqref{DefE}, $R_I^mf^*(t) < G_I^mf(t)$ and thus, thanks to upper semi-continuity of $R_I^mf^*$, there exists some $ \delta >0$ such that
\begin{align}
R_I^mf^*(s) &< G_I^mf(t) & \text{if} \ s \in (t-\delta, t+\delta). \label{LemmaE4}
\end{align}
Obviously $c_t > t+\delta$ which for $s \in (t, t + \delta)$ implies
\begin{equation*}
G_I^mf(t) \geq G_I^mf(s) \geq R_I^mf^*(c_t) = G_I^mf(t).
\end{equation*}
For $s \in (t-\delta, t)$ we have trivially $G_I^mf(s) \geq G_I^mf(t)$, but the sharp inequality is impossible, since it would mean that there exists some $r \in [s, t)$ such that $R_I^mf^*(r) > G_I^mf(t)$, which contradicts \eqref{LemmaE4}. Hence, we have that $G_I^mf(s) = G_I^mf(t) > R_I^mf^*(s) $ for all $s \in (t-\delta, t+\delta)$ and E is therefore an open set.

Conclusion \eqref{PartsE} is simple, since open intervals form a base of open sets on $(0, \infty)$, so $E$ is union of some collection of open intervals, and, if we take instead of every interval of the original collection the maximal interval containing it which is still subset of $E$ and eliminate any duplicities, we have $E$ expressed as a collection of disjoint open intervals and any disjoint collection of open sets on separable space, specially $(0, \infty)$, is at most countable. That all such intervals are bounded follows from the fact, that for any $t \in E$ the point $c_t \notin E$. It remains only name the endpoints and index them by the elements of some $I \subseteq \mathbb{N}$ to get the expression \eqref{PartsE}.

The conclusion \eqref{AssE1} follows immediately from the definition of $E$. As for \eqref{AssE2}, because all $s \in (t, d_k)$ belong to $E$, we have that the supremum $G_I^mf(t) = \sup_{s \in [t, \infty)} R_I^mf^*(s)$ must be attained somewhere in $[d_k, \infty)$ and is therefore equal to $\sup_{s \in [d_k, \infty)} R_I^mf^*(s) = G_I^mf(d_k)$. Conclusion \eqref{AssE2} follows, because $d_k \notin E$ is ensured by our assumption of maximality of the interval $(c_k, d_k)$.
\end{proof}

\subsection{Main result}

We now move to prove the main result. Most of the work is done in the next Theorem. To prove it, we adapt the proof that can be found in \cite[Theorem~9.5]{CianchiPick15} and expand it to fit our needs.

\begin{theorem} \label{TheoremE}
Let $I:(0, \infty) \rightarrow (0, \infty)$ be a non-decreasing left-continuous function and let $\lVert \cdot \rVert_X$ be rearrangement invariant Banach function norm on $M_+$. Let $m \in \mathbb{N}$. Suppose, that $\Phi_{I}^m(t, s)$ is essentially decreasing in $t$. Then
\begin{equation}
\lVert R_I^mf^* \rVert_{X'_d} \leq \lVert R_I^mf^* \rVert_{X'} \leq \lVert G_I^mf \rVert_{X'} \leq 2^{m+1} \lVert R_I^mf^* \rVert_{X'_d} \label{NormEquiv}
\end{equation}
for every $f \in M_+$.
\end{theorem}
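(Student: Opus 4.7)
The first two inequalities are routine: $\|R_I^m f^*\|_{X'_d} \leq \|R_I^m f^*\|_{X'}$ is the universal embedding $X' \hookrightarrow X'_d$ noted after Definition~\ref{DefDownAss}, and $\|R_I^m f^*\|_{X'} \leq \|G_I^m f\|_{X'}$ is the lattice property~\ref{P2} applied to the pointwise bound $R_I^m f^* \leq G_I^m f$ observed just after Definition~\ref{DefG}.

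For the main inequality $\|G_I^m f\|_{X'} \leq 2^{m+1} \|R_I^m f^*\|_{X'_d}$, I would reduce to the setting of Lemma~\ref{LemmaE}. First, Lemma~\ref{LemmaG} lets us replace $I$ by its left-continuous representative at no cost. Second, approximating $f$ by the truncations $f_n = \min(f, n)\,\chi_{(0,n)} \uparrow f$ yields $f_n^* \uparrow f^*$, and Proposition~\ref{BProp}\ref{BProp5} then gives $R_I^m f_n^* \uparrow R_I^m f^*$; this passes to $G_I^m f_n \uparrow G_I^m f$ because taking the supremum from the right commutes with monotone convergence. The Fatou property~\ref{P3} for both $\|\cdot\|_{X'}$ and $\|\cdot\|_{X'_d}$ then transfers the claim from the $f_n$ to $f$. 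Under these reductions Lemma~\ref{LemmaE} supplies a countable disjoint family $\{(c_k, d_k)\}$ with $E = \bigcup_k (c_k, d_k)$, $G_I^m f \equiv R_I^m f^*(d_k)$ on each $(c_k, d_k)$, and $G_I^m f = R_I^m f^*$ on the complement.

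Since $G_I^m f$ is non-increasing and $((0,\infty), \lambda)$ is resonant (Theorem~\ref{CharRes}), Proposition~\ref{PropAss} yields $\|G_I^m f\|_{X'} = \sup_{\|g\|_X \leq 1} \int_0^\infty G_I^m f \cdot g^* \, dt$. For fixed $g$ with $\|g\|_X \leq 1$ I would split
\[
\int_0^\infty G_I^m f \cdot g^* \, dt = \int_{(0,\infty) \setminus E} R_I^m f^* \cdot g^* \, dt + \sum_k R_I^m f^*(d_k) \int_{c_k}^{d_k} g^*(t) \, dt,
\]
and control the right-hand side using Lemma~\ref{LemmaR}: estimate~\eqref{LemmaR2} bounds $(d_k - c_k) R_I^m f^*(d_k)$ by $2^{m+1} \int_{c_k}^{d_k} R_I^m f^*$, while~\eqref{LemmaR1} gives $R_I^m f^* \geq 2^{-m} R_I^m f^*(d_k)$ on $[d_k/2, d_k]$. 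Exploiting the monotonicity of $g^*$, the aim is to produce a non-increasing $\tilde g$ with $\|\tilde g\|_X \leq \|g\|_X$ such that $\int_0^\infty G_I^m f \cdot g^* \, dt \leq 2^{m+1} \int_0^\infty R_I^m f^* \cdot \tilde g \, dt$; the down-associate definition then bounds the latter by $2^{m+1} \|R_I^m f^*\|_{X'_d}$, and passing to the supremum over $g$ concludes the argument.

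The critical difficulty lies in the construction of $\tilde g$. The \textquotedblleft short\textquotedblright{} intervals $(c_k, d_k)$ with $c_k \geq d_k/2$ are handled in a single step: by~\eqref{LemmaR1}, $R_I^m f^*(d_k)\,g^*(t) \leq 2^m R_I^m f^*(t)\, g^*(t)$ pointwise on $[c_k, d_k]$, yielding $R_I^m f^*(d_k) \int_{c_k}^{d_k} g^* \leq 2^m \int_{c_k}^{d_k} R_I^m f^* \cdot g^*$. The \textquotedblleft long\textquotedblright{} intervals with $c_k < d_k/2$ are more delicate: the pointwise lower bound from~\eqref{LemmaR1} covers only $[d_k/2, d_k]$, so the $g^*$-mass on $[c_k, d_k/2]$ has to be redistributed to an admissible location---most naturally via a dyadic decomposition anchored at $d_k$ that iteratively exploits~\eqref{LemmaR1}---without inflating the $X$-norm of the resulting $\tilde g$.
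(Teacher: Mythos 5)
Your setup reproduces the paper's: the first two inequalities are handled the same way, the reduction to $f$ with finitely supported $S_f$ via truncation, monotone convergence and the Fatou property is the paper's second step almost verbatim, and the splitting of $\int_0^\infty G_I^m f\, g^*\,dt$ over $(0,\infty)\setminus E$ and the intervals $(c_k,d_k)$ from Lemma~\ref{LemmaE} is also the paper's. But the proof is not complete where it matters most. Everything hinges on producing a non-increasing $\tilde g$ with $\lVert \tilde g\rVert_X\leq 1$ such that $\int_0^\infty G_I^m f\, g^*\,dt \leq 2^{m+1}\int_0^\infty R_I^m f^*\,\tilde g\,dt$, and for the ``long'' intervals with $c_k<d_k/2$ you only gesture at ``a dyadic decomposition anchored at $d_k$'' without constructing $\tilde g$ or controlling its norm. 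As described, this route runs into a concrete obstruction: iterating \eqref{LemmaR1} across dyadic generations $[2^{-j}d_k,2^{-j+1}d_k]$ costs a factor $2^{mj}$, so pushing the $g^*$-mass on $[c_k,d_k/2]$ up toward $d_k$ by repeated use of \eqref{LemmaR1} cannot yield the uniform constant $2^{m+1}$; and any genuine relocation of that mass produces a function whose $X$-norm is not controlled by $\lVert g\rVert_X$ by anything you have invoked. Your ``short interval'' observation is correct, but it does not extend.

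The missing idea is the paper's averaging operator: take $\tilde g = A(g)$ with $A(g)=g^*\chi_{(0,\infty)\setminus E}+\sum_{k}\chi_{(c_k,d_k)}\frac{1}{d_k-c_k}\int_{c_k}^{d_k}g^*(s)\,ds$, i.e.\ replace $g^*$ by its average on each $(c_k,d_k)$, as in \eqref{DefA}. This is an averaging operator in the sense of \cite[Chapter~2,~Theorem~4.8]{BennettSharpley88}, and that theorem supplies exactly the norm control your sketch lacks: $\lVert g\rVert_X\leq 1$ implies $\lVert A(g)\rVert_X\leq 1$ in every rearrangement invariant space. With it the long/short dichotomy disappears: on each interval, $R_I^m f^*(d_k)\int_{c_k}^{d_k}g^*\,dt=\bigl(\tfrac{1}{d_k-c_k}\int_{c_k}^{d_k}g^*\,dt\bigr)(d_k-c_k)R_I^m f^*(d_k)\leq 2^{m+1}\int_{c_k}^{d_k}A(g)\,R_I^m f^*\,dt$ by \eqref{LemmaR2}, whose proof already absorbs the fact that \eqref{LemmaR1} only covers the top half of the interval with a single extra factor of $2$, uniformly in the ratio $d_k/c_k$. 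Summing, using that $A(g)$ is non-increasing, and taking the supremum over the unit ball of $X$ gives $\lVert G_I^m f\rVert_{X'}\leq 2^{m+1}\lVert R_I^m f^*\rVert_{X'_d}$. Two minor remarks: invoking Lemma~\ref{LemmaG} is superfluous here, since $I$ is already assumed left-continuous in the statement (the lemma is only needed to justify that this assumption is harmless); and your truncation $\min(f,n)\chi_{(0,n)}$ works just as well as the paper's $f\chi_{(0,n)}$, since Lemma~\ref{LemmaE} only requires the support to have finite measure.
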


Note that the assumption that $I$ is left-continuous is without loss of generality, because expression \eqref{NormEquiv} is not affected by replacement of $I$ by its left-continuous representative. Indeed, the first and last quantities in \eqref{NormEquiv} will not be affected, since the latter can differ from the former only on countable subset of $(0, \infty)$, which means that also $R_I^mf^*$ will change only on countable subset of $R_I^mf^*$ , while $\lVert G_I^mf \rVert_{X'}$ will remain the same by Lemma~\ref{LemmaG}.

\begin{proof}
The inequality $\lVert R_I^mf^* \rVert_{X'} \leq \lVert G_I^mf \rVert_{X'}$ is trivial, since $R_I^mf^* \leq G_I^mf$, just as it is trivial that $\lVert R_I^mf^* \rVert_{X'_d} \leq \lVert R_I^mf^* \rVert_{X'}$. The remaining part, i.e.\ that
\begin{equation}
\lVert G_I^mf \rVert_{X'} \leq 2^{m+1}\lVert R_I^mf^* \rVert_{X'_d} \label{NormIneq}
\end{equation}
will be proven in two steps.

At first we prove \eqref{NormIneq} for those $f \in M_+$, for which the set $S_f = \{t \in [0, \infty) \vert f(t) > 0 \}$ has finite measure, i.e. there exists $s_f < \infty$ such that $\lambda (S_f) = s_f$. We may apply at such $f$ Lemma~\ref{LemmaE}, and thus if we define $E$ as in \eqref{DefE} we can define $\{ (c_k, d_k) \vert k \in I \subseteq \mathbb{N} \}$ as the at most countable collection of disjoint open intervals satisfying \eqref{PartsE}, \eqref{AssE1} and \eqref{AssE2}. We then define, for every $g \in X$ (which of course is the rearrangement invariant Banach function space defined by the norm $\lVert \cdot \rVert_X$), the operator $A:M_+ \rightarrow M_+$ by
\begin{equation}
A(g)=g^* \chi_{(0,\infty) \setminus E} + \sum_{k \in I} \bigg( \chi_{(c_k,d_k)} \frac{1}{d_k - c_k} \int_{c_k}^{d_k} g^*(s) \: ds \bigg). \label{DefA}
\end{equation}
$A(g)$ is obviously non-increasing. Moreover, it is also an averaging operator in the sense of \cite[Chapter~2,~Theorem~4.8]{BennettSharpley88} and thus, by the same theorem,
\begin{equation}
\lVert g \rVert_X \leq 1 \Rightarrow \lVert A(g) \rVert_X \leq 1. \label{IneqA}
\end{equation}
Therefore we may for such $g$ derive the following chain of equalities and inequalities:
\begin{align*}
\int_0^{\infty} g^*(t) G_I^mf(t) \: dt &= \int_{(0,\infty) \setminus E} g^*(t) R_I^mf^*(t) \: dt \\
 &+ \sum_{k \in I} \int_{c_k}^{d_k} g^*(t) R_I^mf^*(d_k) \: dt & \text{(by \eqref{AssE1} and \eqref{AssE2})} \\
 &= \int_{(0,\infty) \setminus E} g^*(t) R_I^mf^*(t) \: dt \\
 &+ \sum_{k \in I} \bigg( \frac{1}{c_k - d_k} \int_{d_k}^{c_k} g^*(t)\: dt \bigg) (d_k - c_k)  R_I^mf^*(d_k)\\
 &\leq \int_{(0,\infty) \setminus E} A(g)(t) R_I^mf^*(t) \: dt \\
 &+ 2^{m+1}\sum_{k \in I} \int_{d_k}^{c_k} A(g)(t)R_I^mf^*(t) \: dt  & \text{(by \eqref{DefA} and \eqref{LemmaR2})} \\
 &\leq 2^{m+1} \int_0^{\infty} A(g)(t)R_I^mf^*(t) \: dt \\
 &\leq 2^{m+1} \sup_{ \lVert h \rVert_X \leq 1} \int_0^{\infty} h^*(t) R_I^mf^*(t) \: dt & \text{(by \eqref{IneqA})} \\
 &= 2^{m+1} \lVert R_I^mf^* \rVert_{X'_d} & \text{(by the definition of $\lVert \cdot \rVert_{X'_d}$)}.
\end{align*}
By taking the supremum over unit ball in $X$, we get the desired inequality (thanks to $G_I^mf$ being non-increasing)
\begin{equation*}
\lVert G_I^mf \rVert_{X'} = \lVert G_I^mf \rVert_{X'_d} \leq 2^{m+1} \lVert R_I^mf^* \rVert_{X'_d},
\end{equation*}
i.e. we have proven the inequality \eqref{NormIneq} for all $f \in M_+$ satisfying the additional condition $\lambda( \{ S_f \}) = s_f$ for some $s_f < \infty$.

In order to prove \eqref{NormIneq} for all $f \in M_+$, fix some arbitrary $f$ and consider the sequence $ \{ f_n \}_{ n \in \mathbb{N}} $ of function in $M_+$ defined for every $n \in \mathbb{N}$ by
\begin{equation}
 f_n = f \chi_{(0,n)}. \label{DefFn}
\end{equation}
Then obviously one has for all $f_n$ that $\lambda( \{ S_f \}) = s_f$ for some $s_f \leq n < \infty$ and therefore, by the proof above, we have that for all $n \in \mathbb{N}$
\begin{equation}
\lVert G_I^mf_n \rVert_{X'} \leq 2^{m+1}\lVert R_I^mf^*_n \rVert_{X'_d}. \label{IneqFn}
\end{equation}
Because $\{f_n\}_{n \in \mathbb{N}}$ is obviously non-decreasing and converges pointwise to $f$, it follows by \cite[Chapter~2,~Theorem~1.7]{BennettSharpley88} that the sequence $\{f^*_n\}_{n \in \mathbb{N}}$ is also non-decreasing and  converges pointwise to $f^*$. Hence, $\{R_I^mf_n^*\}_{n \in \mathbb{N}}$ is non-decreasing too (by Proposition~\ref{BProp}, part~\ref{BProp5}), and satisfies, by the classical monotone convergence theorem and \eqref{BProp3}, following equality for every $t \in (0, \infty)$
\begin{equation}
\begin{split}
R_I^mf^*(t) &= \frac{1}{(m-1)!}\frac{1}{I(t)} \int_0^{t} f^*(s) \bigg( \int_s^t \frac{1}{I(r)} \: dr\bigg)^{m-1}  \: ds \\
 &= \frac{1}{(m-1)!}\frac{1}{I(t)} \int_0^{t} \lim_{n \rightarrow \infty}f_n^*(s) \bigg( \int_s^t \frac{1}{I(r)} \: dr\bigg)^{m-1}  \: ds \\
 &= \lim_{n \rightarrow \infty} \frac{1}{(m-1)!}\frac{1}{I(t)} \int_0^{t} f_n^*(s) \bigg( \int_s^t \frac{1}{I(r)} \: dr\bigg)^{m-1}  \: ds \\
 &= \lim_{n \rightarrow \infty} R_I^mf_n^*(t). \label{LimitR}
\end{split}
\end{equation}
Now, $\lVert \cdot \rVert_{X'_d}$ is not a Banach function norm, but it has the Fatou property~\ref{P3} which when combined with the above observations yields
\begin{equation}
\lVert R_I^mf_n^* \rVert_{X'_d} \uparrow \lVert R_I^mf^* \rVert_{X'_d}. \label{LimitNR}
\end{equation}
Furthermore, by the Definition~\ref{DefG} of $G_I^m$, we have
\begin{align*}
G_I^mf(t) &= \sup_{s \geq t} R_I^mf^*(t) = \sup_{s \geq t} R_I^m(\lim_{n \rightarrow \infty} f_n^*(t)) \\
 &= \sup_{s \geq t} \lim_{n \rightarrow \infty} R_I^mf_n^*(t)  & \text{(by \eqref{LimitR})}\\
 &= \lim_{n \rightarrow \infty} \sup_{s \geq t} R_I^mf_n^*(t)  & \text{(since $\{R_I^mf_n^*\}_{n \in \mathbb{N}}$ is non-decreasing)} \\
 &= \lim_{n \rightarrow \infty} G_I^mf_n(t).
\end{align*}
Since the sequence $\{G_I^mf_n\}_{n \in \mathbb{N}}$ is obviously non-decreasing we get by \cite[Chapter~1,~Theorem~1.5]{BennettSharpley88} that
\begin{equation}
\lVert G_I^mf_n \rVert_{X'} \uparrow \lVert G_I^mf \rVert_{X'}. \label{LimitNG}
\end{equation}
We may now prove the desired inequality \eqref{NormIneq}.
\begin{align*}
\lVert G_I^mf \rVert_{X'} &= \lim_{n \rightarrow \infty} \lVert G_I^mf_n \rVert_{X'} & \text{(by \eqref{LimitNG})} \\
 &\leq 2^{m+1} \lim_{n \rightarrow \infty} \lVert R_I^mf_n^* \rVert_{X'_d} & \text{(by \eqref{IneqFn})} \\
 &= 2^{m+1} \lVert R_I^mf^* \rVert_{X'_d}  & \text{(by \eqref{LimitNR})}.
\end{align*}
\end{proof}

It remains only to show that this indeed implies the result we desired. Albeit the proof is almost identical to the one in \cite[Theorem~5.1]{CianchiPick15}, we present it here for the sake of completeness.

\begin{theorem}\label{MainResult}
Let $m \in \mathbb{N}$, let $I:(0,\infty) \rightarrow (0,\infty)$ be a non-decreasing left-continuous function and let  $\lVert \cdot \rVert_X$ and  $\lVert \cdot \rVert_Y$ be rearrangement invariant Banach function norms on $M$. Suppose, that $\Phi_{I}^m(t, s)$ is essentially decreasing in $t$. Then the following statements are equivalent:
\begin{enumerate}
\item \label{Allf} There exists a~constant $C \in \mathbb{R}$ such that
\begin{equation}
\bigg \lVert \int_t^{\infty} \frac{f(s)}{I(s)} \bigg( \int_t^s \frac{1}{I(r)} \: dr\bigg)^{m-1}  \:  ds \bigg \rVert_Y \leq C \lVert f \rVert _X \label{TheoremM1}
\end{equation}
for all non-negative $f \in X$.
\item \label{Nif} There exists a~constant $C' \in \mathbb{R}$ such that
\begin{equation}
\bigg \lVert \int_t^{\infty} \frac{f(s)}{I(s)} \bigg( \int_t^s \frac{1}{I(r)} \: dr\bigg)^{m-1}  \:  ds \bigg \rVert_Y \leq C' \lVert f \rVert _X \label{TheoremM2}
\end{equation}
for all non-increasing non-negative $f \in X$.

Furthermore, if \eqref{TheoremM2} holds, then \eqref{TheoremM1} holds with $C = 2^{m+1}C'$.
\end{enumerate}
\end{theorem}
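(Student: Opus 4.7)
Since \eqref{TheoremM1} trivially implies \eqref{TheoremM2} with $C'=C$, I would focus on the non-trivial direction. The overall plan is to transport the restricted hypothesis \eqref{TheoremM2} to the unrestricted inequality \eqref{TheoremM1} by a double duality argument, with Theorem~\ref{TheoremE} (the Fall of the Star) serving as the decisive bridge.

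I would begin by noting that $((0,\infty),\lambda)$ is non-atomic and hence resonant (Theorem~\ref{CharRes}), so Proposition~\ref{PropAss} yields
\[
\lVert H_I^m f\rVert_Y = \sup_{\lVert g\rVert_{Y'}\leq 1,\ g\geq 0}\int_0^\infty H_I^m f\cdot g\,d\lambda.
\]
The mutual associativity of $H_I^m$ and $R_I^m$ from Proposition~\ref{BProp} converts the integrand into $f\cdot R_I^m g$. A short chain combining the Hardy--Littlewood inequality (Theorem~\ref{THLI}), the pointwise bound \eqref{BProp2}, and the observation that $G_I^m g$ is a non-increasing majorant of $R_I^m g^*$ (and hence of $(R_I^m g)^*$) yields $\int_0^\infty f\cdot R_I^m g\,d\lambda \leq \int_0^\infty f^*\cdot G_I^m g\,d\lambda$. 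H\"older's inequality then produces
\[
\lVert H_I^m f\rVert_Y \leq \lVert f\rVert_X \sup_{\lVert g\rVert_{Y'}\leq 1,\ g\geq 0}\lVert G_I^m g\rVert_{X'}.
\]

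At this point Theorem~\ref{TheoremE} lets me replace $\lVert G_I^m g\rVert_{X'}$ by $2^{m+1}\lVert R_I^m g^*\rVert_{X'_d}$. Unfolding Definition~\ref{DefDownAss} writes the latter as $\sup_{\lVert h\rVert_X\leq 1}\int_0^\infty R_I^m g^*\cdot h^*\,d\lambda$, and a second application of mutual associativity turns this into $\sup_{\lVert h\rVert_X\leq 1}\int_0^\infty g^*\cdot H_I^m h^*\,d\lambda$. Since $h^*$ is genuinely non-increasing, H\"older followed by the hypothesis \eqref{TheoremM2} gives $\lVert R_I^m g^*\rVert_{X'_d}\leq C'\lVert g\rVert_{Y'}$. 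Combining all the estimates delivers $\lVert H_I^m f\rVert_Y\leq 2^{m+1}C'\lVert f\rVert_X$, which is \eqref{TheoremM1} with the announced constant $C=2^{m+1}C'$.

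The main technical obstacle has, fortunately, already been overcome in Theorem~\ref{TheoremE}: the nontrivial estimate controlling $\lVert G_I^m g\rVert_{X'}$ by the down-associate norm of $R_I^m g^*$ is the real content of the reduction principle. In the present step the only care required is choreographic --- duality and mutual associativity must be applied twice and in the correct order, so that \eqref{TheoremM2} is invoked precisely when the argument of $H_I^m$ is a non-increasing function (namely $h^*$), and not on some function that merely happens to appear in an outer dual pairing.
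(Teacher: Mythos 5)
Your proposal is correct and takes essentially the same route as the paper: duality over the resonant space $((0,\infty),\lambda)$ together with the mutual associativity of $H_I^m$ and $R_I^m$, applied twice, with Theorem~\ref{TheoremE} as the decisive bridge; the paper merely orders the steps differently (it applies the hypothesis \eqref{TheoremM2} first to bound $\sup_{\lVert g\rVert_{Y'}\le 1}\lVert R_I^m g^*\rVert_{X'_d}$ and then returns through $\lVert R_I^m g^*\rVert_{X'}$, so that Proposition~\ref{PropAss} already supplies $g^*$ in the dual pairing and your extra Hardy--Littlewood/majorization detour through $G_I^m g$ is not needed, though it is perfectly valid). The only bookkeeping point is the factor $(m-1)!$ relating the kernel in \eqref{TheoremM1}--\eqref{TheoremM2} to $H_I^m$ via \eqref{BProp4}: it enters both your use of the hypothesis and your target inequality, hence cancels, and the constant $C=2^{m+1}C'$ comes out as claimed.
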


Note that, as in Theorem~\ref{TheoremE}, the assumption that $I$ is left-continuous is without loss of generality, since neither \eqref{TheoremM1} or \eqref{TheoremM2} is affected by substituting non-decreasing $I$ with its left-continuous representative.

\begin{proof}
It is trivial that \ref{Allf} implies \ref{Nif}, so to prove the equivalence we assume \ref{Nif} and proceed to find the constant $C$ for which \ref{Allf} holds. Fix $f \in X$ non-negative. We have by \eqref{BProp4}
\begin{align}
\int_t^{\infty} \frac{f(s)}{I(s)} \bigg( \int_t^s \frac{1}{I(r)} \: dr\bigg)^{m-1}  \:  ds &= (m-1)! H_I^mf(t) & \text{for} \ t \in (0, \infty). \label{ByH}
\end{align}
Because the function $H_I^mf$ is non increasing and $(0, \infty)$ is non-atomic, and therefore by Theorem~\ref{CharRes} resonant,  we get by Proposition~\ref{PropAss}
\begin{equation*}
\lVert H_I^mf \rVert_Y = \sup_{\lVert g \rVert_{Y'} \leq 1} \int_0^{\infty} g^*(t)H_I^mf(t) \: dt.
\end{equation*}
Remembering that $H_I^m$ and $R_I^m$ are mutually associate\footnote{See Proposition~\ref{BProp}}, we have
\begin{equation}
\lVert H_I^mf \rVert_Y = \sup_{\lVert g \rVert_{Y'} \leq 1} \int_0^{\infty} f(t) R_I^mg^*(t) \: dt. \label{NormHbyR}
\end{equation}
Thanks to rearrangement invariance of the norm $\lVert \cdot \rVert_X$  and \eqref{ByH}, our assumption~\ref{Nif} tells us
\begin{equation*}
C' = C' \sup_{\lVert f \rVert_{X} \leq 1} \lVert f \rVert_{X} = \sup_{\lVert f \rVert_{X} \leq 1} C' \lVert  f^* \rVert_{X} \geq  \sup_{\lVert f \rVert_{X} \leq 1} (m-1)!  \lVert H_I^mf^* \rVert_Y.
\end{equation*}
Hence, by applying \eqref{NormHbyR} (with $f$ replaced by $f^*$) and interchanging the suprema, we get
\begin{equation*}
\begin{split}
C' &\geq (m-1)! \sup_{\lVert f \rVert_{X} \leq 1} \sup_{\lVert g \rVert_{Y'} \leq 1} \int_0^{\infty} f^*(t) R_I^mg^*(t) \: dt  \\
 &= (m-1)! \sup_{\lVert g \rVert_{Y'} \leq 1} \sup_{\lVert f \rVert_{X} \leq 1} \int_0^{\infty} f^*(t) R_I^mg^*(t) \: dt  \\
 &= (m-1)! \sup_{\lVert g \rVert_{Y'} \leq 1} \lVert  R_I^mg^*(t) \rVert_{X'_d}
\end{split}
\end{equation*}
by the very definition of $\lVert \cdot \rVert_{X'_d}$ (Definition~\ref{DefDownAss}). Now, it follows from Theorem~\ref{TheoremE} that
\begin{equation*}
 \lVert  R_I^mg^*(t) \rVert_{X'} \leq 2^{m+1}  \lVert  R_I^mg^*(t) \rVert_{X'_d}
\end{equation*}
and therefore we have
\begin{equation*}
2^{m+1}C' \geq (m-1)! \sup_{\lVert g \rVert_{Y'} \leq 1} \lVert  R_I^mg^*(t) \rVert_{X'}
\end{equation*}
from which, by Defintion~\ref{DefAss}, Proposition~\ref{PropAss}, interchanging the suprema again and using the mutual associativity of $H_I^m$ and $R_I^m$, we conclude
\begin{equation*}
\begin{split}
2^{m+1}C' & \geq (m-1)! \sup_{\lVert g \rVert_{Y'} \leq 1} \lVert  R_I^mg^*(t) \rVert_{X'} \\
 & = (m-1)! \sup_{\lVert g \rVert_{Y'} \leq 1}  \sup_{\lVert f \rVert_{X} \leq 1} \int_0^{\infty} f(t) R_I^mg^*(t) \: dt  \\
 &= (m-1)! \sup_{\lVert f \rVert_{X} \leq 1} \sup_{\lVert g \rVert_{Y'} \leq 1} \int_0^{\infty} f(t) R_I^mg^*(t) \: dt \\
 &= (m-1)! \sup_{\lVert f \rVert_{X} \leq 1} \sup_{\lVert g \rVert_{Y'} \leq 1} \int_0^{\infty} g^*(t) H_I^mf(t) \: dt =  (m-1)! \sup_{\lVert f \rVert_{X} \leq 1} \lVert H_I^mf \rVert_Y.
\end{split}
\end{equation*}
That is, for every non-negative $f \in X$ the desired inequality holds for $C = 2^{m+1}C'$ and \ref{Allf} is therefore proved including the additional assertion.
\end{proof}

As a corollary, we get the Theorem~\ref{MainResultLite}.

\bibliographystyle{abbrv}
\renewcommand{\refname}{Bibliography}
\bibliography{bibliography}

\end{document}